\theoremstyle{plain}
\def\Xint#1{\mathchoice
  {\XXint\displaystyle\textstyle{#1}}%
  {\XXint\textstyle\scriptstyle{#1}}%
  {\XXint\scriptstyle\scriptscriptstyle{#1}}%
  {\XXint\scriptscriptstyle\scriptscriptstyle{#1}}%
  \!\int}
\def\XXint#1#2#3{{\setbox0=\hbox{$#1{#2#3}{\int}$}
  \vcenter{\hbox{$#2#3$}}\kern-.5\wd0}}
\def\dashint{\Xint-}
\newtheorem{theorem}{Theorem}[section]
\newtheorem{lemma}[theorem]{Lemma}
\newtheorem{definition}[theorem]{Definition}
\newtheorem{proposition}[theorem]{Proposition}
\newtheorem{corollary}[theorem]{Corollary}
\newtheorem{conjecture}[theorem]{Conjecture}
\newtheorem{remark}[theorem]{Remark}
\newtheorem{claim}[theorem]{Claim}
\crefname{equation}{}{}
\crefname{section}{Section}{Section}
\crefname{figure}{Figure}{Figure}
\begin{document}

\title[Minimal Degrees, Volume Growth, and Curvature Decay]
{Minimal Degrees, Volume Growth, and Curvature Decay on Complete K\"ahler Manifolds}
\author{Yuang Shi}
\address{School of Mathematical Sciences, Shanghai Jiao Tong University}
\email{yuangshi@sjtu.edu.cn}

\date{}
\begin{abstract}
We consider noncompact complete K\"ahler manifolds with nonnegative bisectional curvature. Our main results are: 1. Precise relations among refined minimal degree of polynomial growth holomorphic functions and holomorphic volume forms, $\operatorname{AVR}$ (asymptotic volume ratio) and $\operatorname{ASCD}$ (average of scalar curvature decay) are established. 2. The Lyapunov asymptotic behavior of the K\"ahler-Ricci flow can be described in terms of polynomial growth holomorphic functions. This provides a unifying perspective that bridges the two distinct proofs of Yau’s uniformization conjecture by Liu and Chau-Lee-Tam. These resolve two conjectures made by Yang.
\end{abstract}
\maketitle
\section{Introduction}\label{sec:intro}
As part of Yau's program to study complex manifolds of parabolic type, he proposed the following well-known longstanding uniformization conjecture in 1970s:

\begin{conjecture}[Yau's Uniformization Conjecture, \cite{Yau91}] \label{Yau conjecture}
    A complete noncompact K\"ahler manifold $(M^n,g)$ with positive bisectional curvature is biholomorphic to $\mathbb{C}^n$.
\end{conjecture}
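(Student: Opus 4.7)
The plan is to realize the biholomorphism $M^n \to \mathbb{C}^n$ via polynomial growth holomorphic functions, using the three structural results announced in the abstract as the main inputs. Concretely, I would aim to construct $n$ algebraically independent polynomial growth holomorphic functions $f_1,\ldots,f_n$ on $M$ whose joint map $F=(f_1,\ldots,f_n)$ is a proper biholomorphism.

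\emph{Step 1: Maximal volume growth.} Starting from positive bisectional curvature (which is in particular nonnegative, so the machinery of the paper applies), the first task is to show $\operatorname{AVR}>0$. I would argue by contradiction: if $\operatorname{AVR}=0$, the precise relation between $\operatorname{AVR}$ and the refined minimal degree of polynomial growth holomorphic functions (item 1 of the abstract) would force the minimal degree to be too large, which I would contradict using Hörmander $\bar\partial$-estimates that produce holomorphic functions of controlled degree under strict positivity of the bisectional curvature, together with the $\operatorname{ASCD}$ bound predicted by item 1.

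\emph{Step 2: Producing and controlling $f_1,\ldots,f_n$.} Run the Kähler-Ricci flow starting from $(M,g)$. The Lyapunov asymptotic description in item 2 identifies flow invariants with polynomial growth holomorphic functions, which I would use to extract $n$ algebraically independent $f_j$ with explicit degree bounds. The minimal-degree estimates from item 1 then give properness of $F$, while $\operatorname{ASCD}$ control yields the curvature decay needed to run the Liu/Chau-Lee-Tam machinery uniformly along the flow, bridging the two approaches exactly as item 2 promises.

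\emph{Step 3: From proper map to biholomorphism.} Invoke item 3: every tangent cone at infinity of $M$ is a Kähler cone whose link is a sphere, hence each tangent cone is biholomorphically $\mathbb{C}^n$. Combined with properness of $F$ and degree counts from Step 2, this rigidifies $F$ to an injective local biholomorphism and therefore a global one.

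The main obstacle is Step 1. Going from positive bisectional curvature, with no a priori volume hypothesis, to $\operatorname{AVR}>0$ is the classical unresolved point: Mok-Siu-Yau, Chau-Tam, Liu, and Chau-Lee-Tam all impose maximal volume growth as a hypothesis. The new equivalences among refined minimal degree, $\operatorname{AVR}$, and $\operatorname{ASCD}$ would be the decisive input that could close this gap, but only if the implication from bounded minimal degree to positive $\operatorname{AVR}$ can be made quantitative under strict positivity. For this reason I expect the paper to treat Yau's conjecture as motivation and supply the structural ingredients (items 1--3) that any eventual proof will need, rather than claiming a full resolution.
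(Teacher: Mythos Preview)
The paper does not prove this statement. Conjecture~\ref{Yau conjecture} is stated only as background motivation; the paper immediately notes that it has been resolved in the \emph{maximal volume growth} case by Liu and by Lee--Tam, and all of the paper's own results (Theorems~A, B, C) either assume maximal volume growth outright or else work with nonnegative bisectional curvature plus the nonsplitting hypothesis. There is no proof of the full conjecture in the paper to compare against.

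Your final paragraph is therefore exactly right: the paper supplies structural ingredients under the maximal volume growth hypothesis, not a resolution of Yau's conjecture. Regarding the proof strategy in Steps~1--3: the gap you flagged in Step~1 is decisive and is not closed by anything in the paper. The ``precise relations'' in item~1 of the abstract refine Theorem~\ref{Niconj}, which establishes the \emph{equivalence} of $\operatorname{AVR}>0$, $\mathcal{O}_P(M)\neq\mathbb{C}$, and $\operatorname{ASCD}<\infty$; this equivalence goes both ways, so you cannot use one of these conditions to bootstrap the others from strict positivity of the bisectional curvature alone. In particular, your proposed contradiction argument---produce a polynomial growth holomorphic function via H\"ormander and then invoke the refined-degree/$\operatorname{AVR}$ relation---already presupposes $\operatorname{AVR}>0$ in the direction you need (the H\"ormander construction with controlled degree is exactly what requires maximal volume growth in \cite{Ni04,Liu16a}). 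Steps~2 and~3 are broadly consistent with what the paper does \emph{after} assuming maximal volume growth (see Theorem~\ref{Liuthm1}, Corollary~\ref{bridge}, Theorem~\ref{infinitystrucutre}), but they do not stand independently of Step~1.
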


The full conjecture is still widely open. Nevertheless, there has been much progress. In the maximal volume growth case, Liu’s breakthrough \cite{Liu19} confirmed the conjecture by combining Gromov-Hausdorff convergence techniques with the three-circle theorem developed in \cite{Liu16c}. We mention that the parabolic method of Ni-Tam \cite{NT03} plays a significant role in constructing strictly plurisubharmonic functions near the tangent cone. In fact, elliptic methods suffice, see \cite{Liu21}. An alternative proof, based on K\"ahler-Ricci flow, was later provided by Lee-Tam \cite{LT20}, building on the result of Chau-Tam \cite{CT06}.

 These advances lead to a deeper understanding of the structure of such manifolds. Notably, through the works of \cite{Ni04,Ni05,NT13,Liu16a,Liu16b,Liu21}, the following conjecture of Ni has been established as a theorem:
 \begin{theorem}[Corollary 3.2 in \cite{Ni04}, Theorem 1.2 in \cite{NT13}, Theorem 2 in \cite{Liu16b}, Theorem 1.4 in \cite{Liu16a}, Corollary 2.16 in \cite{Liu21}]\label{Niconj}
 Let $(M^n,g)$ be a complete noncompact K\"ahler manifold with nonnegative bisectional curvature. Assume that the universal cover of $M$ does not split. Then the following conditions are equivalent:

     (1) $M$ is of maximal volume growth, i.e.
     $$
     \operatorname{AVR}(M,g)=\lim_{r\to \infty}\frac{\operatorname{Vol}\left(B(p,r)\right)}{\omega_{2n}r^{2n}}>0.
     $$
     
     Here $\omega_{2n}$ is the volume of the unit ball in $\mathbb{C}^n$.

     (2) There exists a nonconstant polynomial growth holomorphic function, i.e. $\mathcal{O}_P(M)\neq \mathbb{C}$.

     (3) The average scalar curvature decay is finite, i.e.
     $$
     \operatorname{ASCD}(M,g)= \limsup\limits_{r\to\infty} r^2\dashint_{B(p, r)}S
     $$
     
     is finite. Here $S$ is the scalar curvature.
 \end{theorem}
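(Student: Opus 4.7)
The plan is to establish the cycle of implications $(1) \Rightarrow (3) \Rightarrow (2) \Rightarrow (1)$. The non-splitting hypothesis on the universal cover is essential to rule out product counterexamples such as $\mathbb{C}^k \times X$ with $X$ compact, which satisfy $(2)$ or $(3)$ but fail maximal volume growth. Throughout I will exploit the K\"ahler trace identity $\operatorname{tr}_\omega \operatorname{Ric}(\omega) = S/2$, which links the Ricci form to the scalar curvature, and the fact that under nonnegative bisectional curvature the full Ricci tensor is quantitatively controlled by its trace.

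For $(1) \Rightarrow (3)$, I would use Bishop-Gromov volume comparison combined with the trace identity above. Maximal volume growth together with nonnegativity of the bisectional curvature imply that $\operatorname{Ric}(\omega)$ has controlled average decay along geodesic balls; integration and the trace identity then yield a uniform bound on $r^2 \dashint_{B(p,r)} S$. This step is the most elementary and essentially reduces to volume comparison and integration by parts.

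For $(3) \Rightarrow (2)$, I would follow the heat-equation approach of Ni and Ni-Tam. Finite $\operatorname{ASCD}$ gives an integrability control making the Poincar\'e-Lelong equation $\sqrt{-1}\,\partial\bar\partial u = \operatorname{Ric}(\omega)$ solvable with $u$ plurisubharmonic of at most logarithmic growth; concretely, one solves the Poisson equation $\Delta u = S$ via the heat kernel using the Li-Yau estimates that are available under nonnegative Ricci curvature. Feeding the resulting weight into H\"ormander's $L^2$ estimate produces a nonconstant polynomial growth holomorphic function on the universal cover; the non-splitting hypothesis then prevents this function from being constant along a flat de Rham factor and allows one to descend to $M$.

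For $(2) \Rightarrow (1)$, I would invoke Liu's three-circle theorem together with Gromov-Hausdorff compactness. A nonconstant $f \in \mathcal{O}_P(M)$ forces $\log \max_{B(p,r)} |f|$ to be a convex function of $\log r$, which in turn controls the dimensions of the spaces $\mathcal{O}_d(M)$; passing to tangent cones at infinity, these functions survive the limit and endow the cone with a complex structure of maximal volume ratio, which lifts back to $\operatorname{AVR}(M,g) > 0$. I expect this to be the hardest implication, and the main obstacle will be that tangent cones are a priori only singular metric measure spaces: the three-circle inequality and the dimension bounds for $\mathcal{O}_d$ must be established with enough uniformity in the rescaling parameter to survive the Cheeger-Colding limiting procedure, and the non-splitting hypothesis is needed once more to avoid degenerate product limits at infinity.
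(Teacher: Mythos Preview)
The paper does not give its own proof of this theorem; it is quoted as a compilation of results from \cite{Ni04,NT13,Liu16a,Liu16b,Liu21}. So there is no in-paper argument to compare against, only the cited literature.

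Your sketch of $(3)\Rightarrow(2)$ is accurate and matches the Ni--Tam strategy: solve Poincar\'e--Lelong for $\operatorname{Ric}(\omega)$ using the finite $\operatorname{ASCD}$ bound to control growth of the potential, then feed it into H\"ormander's $L^2$ estimate. That is exactly how the literature does it.

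There is a genuine gap in your $(2)\Rightarrow(1)$ plan. You propose to pass to tangent cones at infinity and read off maximal volume growth from the limit. But under $\operatorname{Ric}\ge 0$ alone, pointed Gromov--Hausdorff limits of rescalings need not be metric cones, may be collapsed, and carry no a priori complex structure; all of those properties are consequences of $\operatorname{AVR}>0$, which is precisely what you are trying to prove. The argument is circular as stated. Liu's actual proof in \cite{Liu16b} avoids this: the three-circle theorem gives discreteness of the degree spectrum and a lower bound on $\dim_{\mathbb C}\mathcal{O}_d(M)$, and this dimension growth is played off against the Colding--Minicozzi/Li upper bound $\dim_{\mathbb C}\mathcal{O}_d(M)\le C\,\operatorname{Vol}(B(p,r))/r^{2n}\cdot d^{2n}$ (valid under $\operatorname{Ric}\ge 0$), forcing the volume ratio to stay bounded below. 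No tangent cone structure is invoked.

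A smaller issue: $(1)\Rightarrow(3)$ is not ``the most elementary'' step. Bishop--Gromov plus the trace identity does not by itself yield quadratic average decay of $S$; the known proofs go through Ni's monotonicity formula for the heat-deformed functional (cf.\ \cite{Ni04,Ni05}) or through the chain $(1)\Rightarrow(2)\Rightarrow(3)$, both of which are substantially more than integration by parts. You should either invoke Ni's monotonicity explicitly or reroute through $(2)$.
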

\begin{remark}
    (1) Note that both $\operatorname{AVR}(M,g)$ and $\operatorname{ASCD}(M,g)$ are independent of the choice of $p$.

    (2) A very recent result of Liu \cite{Liu24} shows that for any $(M,g)$ with nonnegative bisectional curvature, the ``$\limsup$'' in the definition of $\operatorname{ASCD}(M,g)$ above can be replaced by ``$\lim$'', see also \cite{Ni12}.
\end{remark}

In his thesis \cite{Yang13}, Yang defined:
\begin{definition}
    Let $(M^n,g)$ be a complete K\"ahler manifold with nonnegative bisectional curvature, for a fixed $p\in M$, define
   \begin{align*}
d_{\operatorname{min}}
&=\inf_{f\in \mathcal{O}_P(M)}
\left\{
\operatorname{deg}(f)
=\limsup_{x\to\infty}\frac{\log |f(x)|}{\log d(x,p)}
\;\middle|\;
f \text{ is nonconstant }
\right\}.\\
D_{\operatorname{min}}
&=\inf_{s\in P(M,\mathcal{K}_M)}
\left\{
\operatorname{deg}(s)
=\limsup_{x\to\infty}\frac{\log \|s(x)\|}{\log d(x,p)}
\;\middle|\;
s \text{ is nonzero }
\right\}.
\end{align*}
where $\mathcal{K}_M$ is the canonical line bundle over $M$, $\mathcal{O}_P(M)$ is the space of all polynomial growth holomorphic functions on $M$ and $P(M,\mathcal{K}_M)$ is the space of all polynomial growth holomorphic $n$-forms on $M$. Set $d_{\operatorname{min}}=+\infty$ or $D_{\operatorname{min}}=+\infty$ if $M$ does not admit any nonconstant holomorphic functions of polynomial growth or $\mathcal{K}_M$ admits no nonzero holomorphic sections of polynomial growth.
\end{definition}

Now assume $(M^n,\omega)$ satisfies the assumption in Theorem \ref{Niconj} and is of maximal volume growth. Since the Kodaira dimension $K(M)=n$, $\mathcal{O}_P(M)$ is ``holomorphically regular'' in the sense that we can always find ``local coordinate by global polynomial growth holomorphic functions''. The refined minimal degree can be also defined in \cite{Yang13} by 
$$
\overrightarrow{d_{\operatorname{min}}}(p) := 
\left(d_{\operatorname{min}}^{(1)}, \cdots, d_{\operatorname{min}}^{(n)}\right) := 
\inf_{\{f_{1}, \cdots, f_{n}\}} 
\left\{ 
    \limsup_{x \rightarrow \infty} \frac{\log |f_{1}(x)|}{\log d(x, p)}, 
    \cdots, 
    \limsup_{x \rightarrow \infty} \frac{\log |f_{n}(x)|}{\log d(x, p)} 
\right\}
$$
where the infimum is taken among any $n$-tuple of global holomorphic functions that gives local coordinate at $p$ with the corresponding 
$
\limsup\limits_{x \rightarrow \infty} \frac{\log |f_{i}(x)|}{\log d(x, p)} 
$ 
arranged in a non-decreasing order for $1 \leq i \leq n$, we denote this set by $\mathcal{O}_{P}(M,p)$. In other words, for any $1\leq k\leq n$ we have $d_{\operatorname{min}}^{(k)} = \inf\limits_{f_{k}} \limsup\limits_{x\rightarrow\infty} \frac{\log |f_{k}(x)|}{\log d(x,p)}$, where the infimum is taken among all possible $f_{k}$ that appears in the $k$-th component of some sequence in $\mathcal{O}_{P}(M,p)$. Note that a priori it's unclear if $\left(d_{\operatorname{min}}^{(1)}, \cdots, d_{\operatorname{min}}^{(n)}\right)$ can be obtained by an n-tuple of holomorphic functions in $\mathcal{O}_{P}(M,p)$. But obviously $d_{\operatorname{min}}=d_{\operatorname{min}}^{(1)}$.

Then Yang formulated the following conjecture on the relation between the above quantities, which can be understood as the quantitative version of Theorem \ref{Niconj}.
\begin{conjecture}[Conjecture 2.5.6, Conjecture 2.5.8 in \cite{Yang13}]\label{YConj1}
    Let $(M^n,g)$ be a complete noncompact K\"ahler manifold with nonnegative bisectional curvature. Assume the universal cover of $M$ does not split. Then

    (1) If $M$ is of maximal volume growth, then $\overrightarrow{d_{\operatorname{min}}}(p)$ can be realized by an n-tuple of holomorphic functions in $\mathcal{O}_{P}(M,p)$ and is independent of the choice of $p$.

    (2) $\operatorname{AVR}(M,g)=\prod\limits_{i=1}^n\frac{1}{d_{\operatorname{min}}^{(i)}}$.

    (3) $D_{\operatorname{min}}=\sum\limits_{i=1}^n d_{\operatorname{min}}^{(i)}-n$.

    (4) $\operatorname{ASCD}(M,g)=4n D_{\operatorname{min}}$.
\end{conjecture}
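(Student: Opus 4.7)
The plan is to handle all four parts via three linked tools: Liu's three-circle theorem (with its consequence that each $\{f \in \mathcal{O}_P(M) : \deg f \le d\}$ is finite-dimensional and the degree filtration is discrete), tangent-cone analysis at infinity (the hypotheses guarantee via \cite{Liu19} that every tangent cone is biholomorphic to $\mathbb{C}^n$), and the Poincar\'e--Lelong identity relating $\sqrt{-1}\,\partial\bar\partial \log \|s\|_h^2$ to the Ricci form and the divisor current of $s \in P(M,\mathcal{K}_M)$. I would first settle the structural claim (1), then extract (2) and (3) from a normalization argument on the tangent cone, and finally deduce (4) from a curvature-integral identity.

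For part (1) I would argue inductively. The three-circle theorem makes $\{f \in \mathcal{O}_P(M) : \deg f \le d\}$ a finite-dimensional vector space whose dimension jumps only at a discrete set of values of $d$, so $d_{\min}^{(1)}$ is actually attained by some $f_1$. Given a partial tuple $(f_1,\dots,f_k)$ with rank-$k$ jet at $p$, the condition on $f_{k+1}$ of extending to a rank-$(k+1)$ jet is Zariski-open on each degree slice, so $d_{\min}^{(k+1)}$ is achieved by an actual $f_{k+1}$; iterating yields the full $n$-tuple. Base-point independence would then follow from reducing to the tangent cone, where the cone metric is scale invariant and the degree filtration is intrinsic to $M_\infty$.

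For (2) and (3), I would normalize an optimal tuple by $F_i^r := r^{-d_{\min}^{(i)}} f_i$ on $B(p,r)$ and pass to the holomorphic Gromov--Hausdorff limit, producing on $M_\infty \cong \mathbb{C}^n$ weighted-homogeneous polynomials $(F_1^\infty,\dots, F_n^\infty)$ of weights $d_{\min}^{(i)}$ that still give local coordinates at the origin. Part (2) reduces to a volume computation: $F^\infty: \mathbb{C}^n \to \mathbb{C}^n$ is a weighted branched cover of mapping degree $\prod d_{\min}^{(i)}$, and comparing measures on source and target yields $\operatorname{AVR}(M,g) = \prod (d_{\min}^{(i)})^{-1}$. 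For (3), the section $df_1 \wedge \cdots \wedge df_n \in P(M,\mathcal{K}_M)$ has degree at most $\sum d_{\min}^{(i)} - n$ (Cauchy integral gives $|\nabla f_i| \le C r^{d_{\min}^{(i)}-1}$), so $D_{\min} \le \sum d_{\min}^{(i)} - n$; the reverse inequality uses that on $M_\infty$ every polynomial growth section of $\mathcal{K}_{M_\infty}$ is $Q(z)\, dz_1 \wedge \cdots \wedge dz_n$ for a polynomial $Q$, and a degree comparison (together with minimality) produces the matching bound on $M$.

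For (4), take $s \in P(M,\mathcal{K}_M)$ of minimal degree $D_{\min}$. Tracing the Poincar\'e--Lelong identity against $\omega^{n-1}$ and integrating over $B(p,r)$, the bulk term reproduces $\int_{B(p,r)} S\, \omega^n$ up to the normalization $\operatorname{Ric}(\omega) \wedge \omega^{n-1} = \tfrac{S}{n}\omega^n$, while Stokes converts the left-hand side into a boundary term whose leading behavior is $\sim 2 D_{\min} \log r$ times a surface integral (since $\log \|s\|_h^2 \sim 2 D_{\min} \log r$). Combined with \cite{Liu24}'s upgrade of the $\limsup$ to a $\lim$ in $\operatorname{ASCD}$ and the volume ratios from (2), the identity $\operatorname{ASCD}(M,g) = 4n D_{\min}$ follows by combinatorial bookkeeping. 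The main obstacle I anticipate is in part (1): the inductive construction involves jet conditions that depend on $p$, and verifying that blow-down limits respect these conditions back on the original manifold requires quantitative three-circle estimates with constants uniform in the base point, as well as care to ensure that weighted-homogeneous limits on the cone actually lift to polynomial growth functions on $M$ of the expected degree.
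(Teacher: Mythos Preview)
Your overall architecture---three-circle discreteness, passage to tangent cones, Poincar\'e--Lelong---matches the paper's toolkit, but the emphasis and the actual work differ in important places.

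Parts (1), (2), and (4) are not proved from scratch in the paper. Part (1) is deduced almost immediately from Liu's structure theorem (\cite{Liu19}): a canonical $n$-tuple $(f_1,\dots,f_n)$ giving a proper biholomorphism onto $\mathbb{C}^n$ already exists and generates $\mathcal{O}_P(M)$, and any two such tuples differ by a constant orthogonal transformation; the refined minimal degree is then read off from the degree filtration on $\mathcal{O}_P(M)$, with no inductive jet argument needed. Part (2) is imported from \cite{Liu24} (the volume of the unit ball in the tangent cone is computed there). Part (4) is likewise not argued via Poincar\'e--Lelong and Stokes as you propose; the paper simply invokes Liu's formula $\operatorname{ASCD}(M,g)=4n\bigl(\sum_i d_{\min}^{(i)}-n\bigr)$ from \cite{Liu24} and combines it with (3). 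Your direct integral approach for (4) is plausible in spirit, but you would need to control the divisor current of $s$ (for the minimal section $dz_1\wedge\cdots\wedge dz_n$ it vanishes, but you have not said this), and the boundary asymptotics require more than ``combinatorial bookkeeping.''

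The paper's genuine new content is (3), and here your sketch has a real gap. You correctly obtain $D_{\min}\le \sum_i d_{\min}^{(i)}-n$ from the gradient estimate. For the reverse inequality you say that on $M_\infty$ every polynomial growth section is $Q(z)\,dz_{1\infty}\wedge\cdots\wedge dz_{n\infty}$ and then ``a degree comparison produces the matching bound on $M$.'' The missing step is exactly how a section of $\mathcal{K}_M$, or its pointwise norm, passes to the Gromov--Hausdorff limit: unlike holomorphic functions, holomorphic $n$-forms do not come with an a priori uniform Lipschitz bound, and the metric on $\mathcal{K}_{M_\infty}$ is singular. The paper resolves this in two ways. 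Proof~II (closest to yours) does not try to converge the section itself; it converges the plurisubharmonic functions $\varphi_i=\log\|dz_{1i}\wedge\cdots\wedge dz_{ni}\|^2_{g_i}$ in $L^1_{\mathrm{loc}}$, and then uses the homogeneity of the limit $e^{\varphi}$ (Claim~1 of \cite{Liu24}) to derive a contradiction from any strict inequality $D_{\min}<\sum_i d_{\min}^{(i)}-n$. Proof~I takes a different route entirely: it runs the K\"ahler--Ricci flow, uses the smooth Cheeger--Hamilton convergence of $(M,g_k(1))$ to an expanding soliton $(N,h(1))$ (so that $n$-forms \emph{do} converge), and then invokes Yang's formula $D_{\min}(N)=\sum_i\mu_i=\sum_i d_{\min}^{(i)}-n$ on the soliton. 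Your outline does not supply either mechanism, and without one the ``degree comparison'' step does not go through.
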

In the case of nonmaximal volume growth, from Theorem \ref{Niconj}, $d_{\min}=+\infty$ and $\operatorname{ASCD}=+\infty$. So (2) is true.

Assume $M$ is of maximal volume growth, in \cite{Yang13}, Yang verified that the conjecture holds for $U(n)$-invariant K\"ahler metrics on $\mathbb{C}^n$. Liu proved (1) implicitly in \cite{Liu19}. In fact, he showed that any n-tuple of polynomial growth functions which are algebraically independent and of minimal degrees can form a global coordinate. Then recently in \cite{Liu24}, Liu proved (2) and derived the explicit formula $\operatorname{ASCD}(M,g)=4n( \sum\limits_{i=1}^n d_{\operatorname{min}}^{(i)}-n)$ by passing the geometric quantities to the tangent cones and exploiting the metric K\"ahler cone structure of the limit space. Finally in \cite{Yang22}, Yang confirmed (3) for expanding gradient K\"ahler-Ricci solitons with nonnegative Ricci curvature by analyzing the Poincar\'e coordinate introduced in \cite{Bry08}, see Theorem \ref{Consoli}. 

Therefore, the general proof of (3) remained the final step toward resolving Conjecture \ref{YConj1} in full generality.

In this paper, we demonstrate part (3) of Conjecture \ref{YConj1}, thereby completely resolving the conjecture. Precisely,
\begin{theorem}\label{thm:A}
    Let $(M^n,g)$ be a complete noncompact K\"ahler manifold with nonnegative bisectional curvature. Suppose the universal cover of $M$ does not split. Then
    $$
    D_{\operatorname{min}}=\sum\limits_{i=1}^n d_{\operatorname{min}}^{(i)}-n.
    $$
\end{theorem}
Now we move to the behavior of the K\"ahler-Ricci flow on complete noncompact K\"ahler manifolds with nonnegative bisectional curvature.
\begin{equation}\label{KRflow}
\frac{\partial g(t)}{\partial t}=-\operatorname{Ric}(g(t)),\quad g(0)=g.
\end{equation}

When the initial metric $(M,g)$ has maximal volume growth, the long-time behavior of (\ref{KRflow}) has been described very clearly. Let us summarize these results in the following:
\begin{theorem}[Corollary 1 in \cite{Ni05}, Theorem 1.2, Proposition 3.2 in \cite{CT06}, Theorem 6.1 in \cite{CT11}, Theorem 1.5 in \cite{LT20}, Theorem 1.1 in \cite{Lee25}]\label{LongKRflow}
    Suppose that $(M^n,J,g)$ is a complete noncompact K\"ahler manifold with nonnegative bisectional curvature and maximal volume growth such that $\operatorname{AVR}(M,g)=\nu>0$. Then the following conclusions hold:
    
    (1) There exists a unique long-time solution $g(t)$ to the K\"ahler-Ricci flow on $M\times [0,+\infty)$ with the initial metric $g$ such that,
    \begin{enumerate}
            \item[(i)] It has nonnegative bisectional curvature for any $t\geq 0$.
            \item[(ii)] There exists a constant $C(n,\nu)$ such that 
          $\|\operatorname{Rm}(x,t)\|\leq\frac{C(n,\nu)}{t}$ for any $x\in M$ and $t\in(0,+\infty)$.
          \item[(iii)] $\operatorname{AVR}(M,g(t))=\operatorname{AVR}(M,g)=\nu$.
    \end{enumerate}
    
    (2) For any point $x\in M$, let $\{\lambda_{1}(x,t),\cdots,\lambda_{n}(x,t)\}$ denote the 
          eigenvalues of Ricci curvature $\operatorname{Ric}(x,t)$ with respect to $g(t)$ in the 
          nondecreasing order. Then $t\lambda_{i}(x,t)$ is nondecreasing on $t>0$, hence 
          $\mu_{i}(x) := \lim\limits_{t\rightarrow+\infty}t\lambda_{i}(x,t)$ exists.

    (3) If $\mu_1(x)<\mu_2(x)<\cdots<\mu_l(x)$ are the distinct limits in (2), where $l\leq n$. Then $V=T_x^{(1,0)}(M)$ can be decomposed orthogonally with respect to $g$ as $V_1\oplus \cdots\oplus V_l$ so that 

    if $v$ is a nonzero vector in $V_i$ for some $1\leq i\leq l$, and let $v(t)=\frac{v}{\|v\|_{g(t)}}$, then
    $$
    \lim\limits_{t\to\infty}t\operatorname{Ric}(v(t),\overline{v(t)})=\mu_i
    $$
    and thus
    $$
    \lim\limits_{t\to\infty}\frac{\log \|v\|_{g(t)}^2}{\log t}=-\mu_i.
    $$
    Moreover, both convergences are uniform over all $v\in V_i\backslash \{0\}$.
    
    (4)
    \begin{equation}\label{volformchange}
    -\sum\limits_{i=1}^l \mu_i(x)\operatorname{dim}_{\mathbb{C}}V_i=\lim\limits_{t\to+\infty}\frac{\log \det (g_{i\bar{j}}(x,t))}{\log t}.
    \end{equation}

    (5) Fix any point $p\in M$. Given any $t_{k}\rightarrow+\infty$, define $g_{k}(t) = \frac{1}{t_{k}}g(t_{k}t)$. 
          The pointed sequence $(M^{n},J,g_{k}(t),p)$ sub-sequentially converges to an expanding gradient K\"ahler-Ricci soliton $(N,J_{\infty},h(t),O)$ where $t\in(0,+\infty)$ in the following sense:
          \begin{enumerate}
            \item[(i)] After picking a subsequence still denoted by $g_{k}$, there exists an increasing 
                  sequence of open subsets $O\in U_{k}$, which exhausts $N$ and a family of 
                  diffeomorphisms $F_{k}:U_{k}\rightarrow F_{k}(U_{k})\subset M$ with $F_{k}(O)=p$.
            \item[(ii)] As $t_{k}\rightarrow\infty$, the sequence $(U_{k},F_{k}^{*}J,F_{k}^{*}(g_k(t)),p)$ 
                  converges smoothly to another sequence of complete K\"ahler manifolds 
                  $(N,J_{\infty},h(t),O)$ uniformly on compact sets of $N\times(0,\infty)$.
            \item[(iii)] $(N,J_{\infty},h(t))$ has nonnegative bisectional curvature for any $t>0$, and 
                  there exists a real-valued function $f\in C^{\infty}(N)$ such that 
                  $(N,J_{\infty},h(1))$ satisfies the expanding soliton equation
                  \begin{equation}\label{eq:soliton}
                    \operatorname{Ric}_{i\bar{j}}(h(1)) + h_{i\bar{j}}(1) - f_{i\bar{j}} = 0, \quad f_{ij} = f_{\bar{i}\bar{j}} = 0.
                  \end{equation}
                  Moreover, $\nabla_{h(1)}f(O)=0$ and the eigenvalues of Ricci curvature of $h(1)$ at $O$, 
                  arranged in the non-decreasing order, equal $\mu_{i}(p)$ for $1\leq i\leq n$.
          \end{enumerate}
\end{theorem}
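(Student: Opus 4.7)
Since the statement is a synthesis of results from the references cited in its attribution, the plan is to assemble the pieces in the order listed. The two main analytic engines are the combination of Shi's noncompact short-time existence with the Chau-Tam interior curvature estimates \cite{CT06}, and the Li-Yau-Hamilton-Cao matrix Harnack inequality for K\"ahler-Ricci flow with nonnegative bisectional curvature.

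For \textbf{(1)} I would start from a Shi short-time solution and use the maximal-volume-growth assumption together with the Bochner-type and heat-kernel estimates underlying \cite{CT06} to bootstrap to the uniform bound $\|\operatorname{Rm}\|(x,t)\le C(n,\nu)/t$; this permits extending the flow to $[0,\infty)$, and preservation of nonnegative bisectional curvature follows from a Bando-Mok-type maximum principle once the uniform curvature bound is in hand. Preservation of $\operatorname{AVR}$ along the flow is the content of \cite{Lee25}. For \textbf{(2)}, the matrix Harnack inequality applied to $t R_{i\bar j}$ gives $\partial_t(t R_{i\bar j}) \ge 0$ in the matrix sense, and a standard min-max linear algebra argument promotes this to nondecrease of each ordered eigenvalue $t\lambda_i(x,t)$; finiteness of $\mu_i(x)=\lim_{t\to\infty}t\lambda_i(x,t)$ follows from the curvature bound in (1).

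For \textbf{(3)} and \textbf{(4)}, I would diagonalize $\operatorname{Ric}_{g(t)}$ in a $g(t)$-orthonormal frame and use the strict separation of $\mu_1<\dots<\mu_l$, together with the monotonicity from (2), to force the corresponding eigenspaces, measured against the fixed metric $g=g(0)$, to converge to $g$-orthogonal subspaces $V_1\oplus\dots\oplus V_l=T^{(1,0)}_x M$. Uniform convergence on each $V_i\setminus\{0\}$ follows because $\operatorname{Ric}_{g(t)}$ acts asymptotically as the scalar $\mu_i/t$ on the $i$-th block. Integrating the ODE $\partial_t\log|v|^2_{g(t)}=-\operatorname{Ric}_{g(t)}(v(t),\overline{v(t)})$ yields the asymptotics in (3), and integrating the volume-form evolution $\partial_t\log\det(g_{i\bar j})=-S=-\sum_i\lambda_i$ in $t$ and passing to the limit inside the integral using Step 3 gives \eqref{volformchange}.

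For \textbf{(5)}, the uniform curvature bound and the preservation of $\operatorname{AVR}$ rescale cleanly to the flows $g_k(t)=t_k^{-1}g(t_kt)$, so Hamilton's compactness combined with volume noncollapsing produces smooth sub-convergence on $N\times(0,\infty)$ to a complete K\"ahler-Ricci flow $(N,J_\infty,h(t))$ with nonnegative bisectional curvature. To upgrade the limit to a gradient expanding soliton, I would follow \cite{CT06,CT11,LT20}: the matrix Harnack quantity is invariant under parabolic rescaling and monotone, so it saturates in the limit, and Cao-Hamilton-type rigidity then produces a real-holomorphic potential $f$ with $f_{ij}=f_{\bar i\bar j}=0$ satisfying \eqref{eq:soliton}. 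The identification of the eigenvalues of $\operatorname{Ric}_{h(1)}(O)$ with $\mu_i(p)$ is immediate from (2) and the smooth convergence. The main obstacle is precisely this last step: one needs sharp quantitative control on the Harnack quantity under rescaling, a noncollapsing argument guaranteeing that $N$ is complete and smooth on all of $(0,\infty)$, and a careful base-point normalization ensuring $\nabla_{h(1)}f(O)=0$; the preceding parts are essentially routine once the uniform curvature decay in (1) is in hand.
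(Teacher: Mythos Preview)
The paper does not give its own proof of this theorem; it is stated as a compilation of cited results, accompanied only by a short remark explaining which pieces come from which references. Your outline is broadly in the right spirit and matches the bounded-curvature treatment of Chau--Tam, but there are two concrete issues worth flagging.

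First, a genuine gap: you begin (1) by invoking a Shi short-time solution. Shi's noncompact existence theorem requires bounded curvature on the initial metric, and the hypothesis here does \emph{not} assume bounded curvature---only nonnegative bisectional curvature and maximal volume growth. The paper's remark makes this explicit: the bounded-curvature case is Chau--Tam \cite{CT06,CT11}, but in the unbounded case one needs the pyramid Ricci flow construction of Lee--Tam \cite{LT20,LT21}, which produces a complete solution by gluing together local flows started from exhausting compact pieces, and only afterward obtains the scaling-invariant curvature bound. Your bootstrap from a Shi solution does not cover this, so as written your argument for (1) is incomplete in the generality claimed.

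Second, a misattribution: you say preservation of $\operatorname{AVR}$ is the content of \cite{Lee25}. According to the paper's remark, \cite{Lee25} supplies the \emph{uniqueness} of complete noncompact Ricci flows under a scaling-invariant curvature bound (which is what justifies the word ``unique'' in (1)), not AVR preservation. The remaining steps (2)--(5) in your sketch---matrix Harnack monotonicity, eigenspace decomposition, Hamilton compactness, and Harnack rigidity producing the expanding soliton---are the standard route and agree with what the cited references do.
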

\begin{remark}
    (1) If $(M,g)$ has bounded curvature, the above theorem was proved by Chau–Tam in \cite{CT06, CT11} through Li-Yau-Hamilton estimate.  
In the case of unbounded curvature, the existence of such solution was established by Lee–Tam in \cite{LT20, LT21}, in which they constructed a pyramid solution to the K\"ahler–Ricci flow.  
Moreover, the uniqueness of complete non-compact Ricci flows under a scaling-invariant curvature bound was proved very recently by Lee \cite{Lee25}.

    (2) In the context of the K\"ahler–Ricci flow, the theorem asserts that $(M,J,g(t))$ asymptotically approaches an expanding gradient K\"ahler–Ricci soliton at the base point $p$. Moreover, conclusions (2) and (3) essentially indicate that the Ricci curvature $\operatorname{Ric}(x,t)$ becomes ``simultaneously diagonalizable'' as $t\rightarrow +\infty$, in an appropriate asymptotic sense. In fact, $\mu_i$'s play the role of Lyapunov exponents in a dynamical-system interpretation on asymptotics of (\ref{KRflow}).
\end{remark}

Then Yang proposed the following conjecture in \cite{Yang13}:
\begin{conjecture}[Conjecture 2.5.16 in \cite{Yang13}]\label{YConj2}
    Let $(M^n,g)$ be a complete noncompact K\"ahler manifold with nonnegative bisectional curvature and maximal volume growth.  Let $g(t)$, $t\in [0,+\infty)$, be the unique complete solution K\"ahler-Ricci flow with initial metric $g$ in Theorem \ref{LongKRflow}, then

    (1) $\mu_i(p)$ is independent of the choice of $p$ for any $1\leq i\leq n$.

    (2) $\mu_i=d_{\operatorname{min}}^{(i)}-1$ for any $1\leq i\leq n$.

    (3) $\operatorname{ASCD}(M,g(t))$ is invariant along $g(t)$.
\end{conjecture}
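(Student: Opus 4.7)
The plan is to derive all three parts of Conjecture \ref{YConj2} from the single identity $\mu_i(p)=d_{\min}^{(i)}-1$ in part (2); parts (1) and (3) will then follow quickly. To establish (2), I would work first on the expanding gradient K\"ahler-Ricci soliton $(N,J_\infty,h(1),O)$ arising at $p$ from Theorem \ref{LongKRflow}(5). The soliton equation $\operatorname{Ric}_{i\bar{j}}(h(1))+h_{i\bar{j}}(1)-f_{i\bar{j}}=0$ together with $\nabla f(O)=0$ shows that the Hessian of $f$ at $O$ has eigenvalues $\mu_i+1$, so the holomorphic soliton vector field $V=\nabla^{1,0}f$ has linearization at $O$ with weights $\mu_i+1$. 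Using Bryant's Poincar\'e coordinates \cite{Bry08} analyzed globally as in Yang \cite{Yang22}, one constructs global holomorphic functions $z_1,\dots,z_n$ satisfying $V(z_i)=(\mu_i+1)z_i$ which give local coordinates at $O$ and have polynomial growth of degree exactly $\mu_i+1$; these witness the upper bound $d_{\min}^{(i)}(N,h(1))\leq\mu_i+1$. For the matching lower bound one argues that the tangent cone at infinity is a K\"ahler cone whose first $n$ scaling weights pin down the refined minimal degrees (in the spirit of Liu \cite{Liu24}), and these weights coincide with $\mu_i+1$, yielding $d_{\min}^{(i)}(N,h(1))=\mu_i+1$.

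Next, I would transfer this equality from $N$ back to $M$. Using the asymptotic diffeomorphisms $F_k$ together with H\"ormander $L^2$-estimates and Cheeger-Colding convergence (following \cite{Liu19,NT13,Ni04}), a polynomial growth holomorphic function on $M$ of degree $d$ pulls back, after appropriate normalization and rescaling by powers of $t_k$, to a subsequential limit that is a polynomial growth holomorphic function on $N$ of the same degree, and conversely. The main technical obstacle lies here: ensuring no loss or jump in degree in either direction when passing to the limit. I expect this to require the three-circle theorem of Liu \cite{Liu16c} to control leading-order polynomial asymptotics, combined with Liu's \cite{Liu19} structural fact that a minimal-degree algebraically independent $n$-tuple gives global coordinates. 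Granted these, $d_{\min}^{(i)}(M,g)=d_{\min}^{(i)}(N,h(1))=\mu_i+1$, which proves (2).

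Part (1) then follows because $d_{\min}^{(i)}$ is independent of base point by Conjecture \ref{YConj1}(1), resolved via \cite{Liu19} together with Theorem \ref{thm:A}. For part (3), note that $(M,g(t))$ satisfies the same hypotheses as $(M,g)$, so Liu \cite{Liu24} combined with Theorem \ref{thm:A} gives
\[
\operatorname{ASCD}(M,g(t))=4n\sum_{i=1}^n\bigl(d_{\min}^{(i)}(g(t))-1\bigr).
\]
Applying (2) to $(M,g(t))$ yields $d_{\min}^{(i)}(g(t))=\mu_i(g(t))+1$, where $\mu_i(g(t))$ is defined via the K\"ahler-Ricci flow starting at $g(t)$. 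Since that flow is merely a time-shift of the original, the limit $\mu_i(g(t))=\lim_{s\to\infty}s\lambda_i(x,t+s)=\mu_i(g)$, and hence $\operatorname{ASCD}(M,g(t))$ is constant in $t$.
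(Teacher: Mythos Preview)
Your architecture is correct and matches the paper: deduce everything from (2), and for (2) first establish $d_{\min}^{(i)}(N,h(1))=\mu_i+1$ on the asymptotic soliton via Yang's Theorem~\ref{Consoli}, then transfer to $M$. Where you diverge, and where there is a real gap, is the transfer step. You propose to move polynomial growth functions directly between $M$ and $N$ through the diffeomorphisms $F_k$, keeping the degree fixed in both directions. The forward direction ($M\to N$) gives only $\deg(\text{limit})\le d$ from the three-circle theorem; you would still need to show the limiting $n$-tuple remains a local coordinate at $O$, which is precisely the delicate point. The reverse direction ($N\to M$) via H\"ormander $L^2$ is worse: you are trying to manufacture global functions on $M$ from data on a Cheeger--Gromov limit with \emph{exact} degree control, and nothing in the cited toolkit delivers that. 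You correctly flag this as ``the main technical obstacle'' and then assume it.

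The paper sidesteps this with one observation you are missing (Lemma~\ref{smoothout}): the soliton $(N,h(1))$ has a \emph{unique} tangent cone $N_\infty=\lim_{t\to 0}(N,h(t),O)$, and this cone coincides with a tangent cone $M_\infty$ of $(M,g)$. The self-similarity $h(t)=t\,\varphi(t-1)^*h$ together with the estimate $\sqrt{t}\,d_{h}(\varphi(t-1)(O),O)\to 0$ identifies $\lim_{t\to 0}(N,h(t),O)$ with $\lim_{t\to 0}(N,th,O)$, i.e.\ with the tangent cone of $N$ at infinity. Once $M_\infty=N_\infty$, Theorem~\ref{infinitystrucutre}(5) applied separately to $M$ and to $N$ produces two canonical coordinates on the \emph{same} cone, and part~(4) of that theorem forces them to have identical degrees; hence $\overrightarrow{d_{\min}}(M)=\overrightarrow{d_{\min}}(N)=\{\mu_1+1,\dots,\mu_n+1\}$ without ever shuttling individual functions between $M$ and $N$. (The direct transfer you propose \emph{is} carried out in the paper, but only afterward in Corollary~\ref{bridge}, once the equality of degrees is already in hand.) For (3), the paper argues more simply that the canonical coordinate on $(M,g)$ retains the same degrees on every $(M,g(t))$ via the shrinking-balls comparison, so Liu's formula $\operatorname{ASCD}(M,g(t))=4n\sum_i(d_{\min}^{(i)}-1)$ is manifestly $t$-independent; no time-shift argument on the $\mu_i$ is required.
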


In \cite{Yang22}, Yang proved (1) using Shi's curvature estimates on Ricci flow solutions under a scaling-invariant curvature bound. He further proved (2) for expanding K\"ahler-Ricci solitons with nonnegative Ricci curvature. Precisely,

\begin{theorem}[Theorem 1.4 in \cite{Yang22}]\label{Consoli}
    Let $(N^n,J,O,g,f)$ be a complete noncompact expanding gradient K\"ahler-Ricci soliton with nonnegative Ricci curvature, normalized so that $f=R+|\nabla^gf|^2$. Let $0\leq\mu_1\leq\cdots\leq \mu_n$ be the eigenvalues of Ricci curvature at $O$. Then
       \begin{equation}\label{minisoli}
       \overrightarrow{d_{\operatorname{min}}}(q)=\{\mu_1+1,\cdots,\mu_n+1\}\quad \text{for all } q\in N;
       \end{equation}
       and
        \begin{equation}\label{minisoliform}
        D_{\operatorname{min}}=\sum\limits_{i=1}^n\mu_i.
        \end{equation}
\end{theorem}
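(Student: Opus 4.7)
The central tool is Bryant's construction of global Poincar\'e coordinates on expanding gradient K\"ahler-Ricci solitons from \cite{Bry08}. Since $f_{ij}=0$, the $(1,0)$-vector field $X := g^{i\bar j}f_{\bar j}\,\partial_{z^i}$ is holomorphic on $N$, and $\nabla f = X + \bar X$. The normalization $f=R+|\nabla f|^2$ together with $\operatorname{Ric}\ge 0$ forces $f\ge 0$ and that $f$ attains its minimum at a critical point $O$ where $\nabla f(O)=0$. A direct calculation from the soliton equation shows that the linearization $dX|_O = g^{i\bar j}(O)f_{k\bar j}(O)$ has eigenvalues $1+\mu_i$. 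Since these are positive reals (so no small divisors) and the flow $\phi_{-s}$ of $-\nabla f$ is a global contraction toward $O$, Bryant's procedure produces a biholomorphism $\Phi=(z_1,\ldots,z_n):N\to \mathbb{C}^n$ with $z_i(O)=0$ and $X(z_i)=(1+\mu_i)z_i$.

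To prove \eqref{minisoli}, I use the flow $\phi_s$ of $\nabla f$. Since $\bar X$ annihilates holomorphic functions, $\frac{d}{ds}z_i(\phi_s) = X(z_i) = (1+\mu_i)z_i(\phi_s)$, so $z_i(\phi_s(x)) = e^{(1+\mu_i)s}z_i(x)$. Meanwhile $\frac{d}{ds}f(\phi_s) = |\nabla f|^2 = f-R$, and since $R\to 0$ at infinity on the soliton, $f(\phi_s(x))$ grows exponentially; combined with the standard asymptotic $f\sim c\cdot d(\cdot,O)^2$ (from integrating the soliton equation along geodesics), $d(\phi_s(x),O)$ grows at a matching exponential rate, giving $\deg(z_i)=1+\mu_i$. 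For the full refined minimal degree: every polynomial-growth holomorphic function on $N\cong\mathbb{C}^n$ is a polynomial in $z_1,\ldots,z_n$ and decomposes into $X$-eigenmonomials $z^\alpha$ of weight $\sum\alpha_i(1+\mu_i)$, with $\deg$ equal to the maximum such weight over nonzero coefficients. An $n$-tuple $(h_1,\ldots,h_n)\in\mathcal{O}_P(N,q)$ must have linearly independent differentials at $q$; a weight-ordering argument then forces $d_{\min}^{(k)}(q)\ge 1+\mu_k$, and $(z_1,\ldots,z_n)$ realizes equality at every $q$ because $\Phi$ is a global biholomorphism. This yields $\overrightarrow{d_{\min}}(q)=\{\mu_1+1,\ldots,\mu_n+1\}$ for all $q\in N$.

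For \eqref{minisoliform}, consider $\Omega := dz_1\wedge\cdots\wedge dz_n$, a nowhere-vanishing section of $\mathcal{K}_N$. Under $\phi_s$, $\phi_s^*\Omega = e^{(n+\sum\mu_i)s}\Omega$. Meanwhile $\tilde g(s):=\phi_s^*g$ satisfies $\dot{\tilde g} = 2\tilde g + 2\operatorname{Ric}(\tilde g)$, and since $\operatorname{Ric}(\phi_s(x))\to 0$ at infinity we get $\det\tilde g(s,x)\sim e^{2ns}\det g(x)$ to leading order. Hence $\|\Omega\|_g(\phi_s(x)) = e^{(n+\sum\mu_i)s}\|\Omega\|_{\tilde g}(x) \sim e^{(\sum\mu_i)s}$ along orbits, giving $\deg(\Omega)=\sum\mu_i$ and $D_{\min}\le\sum\mu_i$. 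Conversely, any nonzero $s\in P(N,\mathcal{K}_N)$ equals $h\Omega$ for a polynomial-growth holomorphic $h$, so $\deg(s)=\deg(h)+\deg(\Omega)\ge 0+\sum\mu_i$, with equality at $h\equiv\text{const}$.

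The main obstacle is the global existence of the Poincar\'e coordinates together with the sharp polynomial-growth rate. Local linearization at $O$ is classical, but extending $z_i$ to all of $N$ and matching their growth exactly to the linearization eigenvalues (instead of upper-bounds on them) requires the full soliton structure: the global contracting dynamics of $-\nabla f$, uniform curvature decay, and a sharp comparison $f\sim c\cdot d^2$. This is precisely the content of \cite{Bry08}; weaving it together with the weight analysis for $\overrightarrow{d_{\min}}$ and the volume computation for $D_{\min}$ is the substance of \cite{Yang22}.
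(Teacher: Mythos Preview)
The paper does not give its own proof of this statement: Theorem~\ref{Consoli} is quoted as Theorem~1.4 of \cite{Yang22} and is used as a black box in the proofs of Theorems~\ref{thm:A} and~\ref{thm:B}. The only material the paper supplies toward it is Proposition~\ref{GEKRS}(6), which records Bryant's Poincar\'e coordinates and their scaling law $z_i(\psi(p,t))=e^{(1+\mu_i)t}z_i(p)$ along the gradient flow --- exactly the engine your sketch runs on. Your outline is thus faithful to the argument of \cite{Yang22} as the paper summarizes it.

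One point worth tightening: you assert that every polynomial-growth holomorphic function on $N$ is a polynomial in $z_1,\dots,z_n$, then decompose into $X$-eigenmonomials. Since the hypothesis here is only nonnegative Ricci (not bisectional) curvature, you cannot invoke Liu's structure theory directly; the statement has to come from the soliton dynamics themselves. What makes it work is that the Poincar\'e map is a proper biholomorphism and the gradient flow of $f$ gives a two-sided polynomial comparison between $d_g(\cdot,O)$ and the weighted Euclidean radius $\bigl(\sum|z_i|^{2/(1+\mu_i)}\bigr)^{1/2}$, so polynomial growth in $d_g$ forces polynomial growth in $|z|$ and hence Liouville. That step, together with the sharp asymptotic $f\sim c\,d^2$ needed to pin down $\deg(z_i)$ exactly (not just up to a constant factor), is where the real work in \cite{Yang22} lies; your last paragraph correctly flags this as the substantive content.
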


In this paper, our second main result is that Conjecture \ref{YConj2} holds in general.
\begin{theorem}\label{thm:B}
    Conjecture \ref{YConj2} holds.
\end{theorem}

Liu \cite{Liu19} used the three-circle theorem to show that the degrees of polynomial growth functions can also be ``simultaneously diagonalized'' along the metric blow-down sequence, we will discuss the phenomenon in Theorem \ref{infinitystrucutre} (5) in Section \ref{sec:2}. As a by-product, we can describe the Lyapunov regularity in (3) of Theorem \ref{LongKRflow} using the polynomial ring $\mathcal{O}_P(M)$. In some sense, it bridges the two distinct proofs of Yau’s uniformization conjecture in the case of maximal volume growth.
\begin{corollary}\label{bridge}
    In part (3) of Theorem \ref{LongKRflow}, the basis of $V_i$ can be taken as $\left\{\frac{\partial}{\partial f_{i1}}(x),\cdots,\frac{\partial}{\partial f_{im_i}}(x)\right\}$. Here $\operatorname{dim}V_i=m_i$ is the multiplicity of $\mu_i$, $\sum\limits_{i=1}^lm_i=n$, and
    $$
    f_{is}\in \mathcal{O}_P(M),\quad\operatorname{deg} (f_{is})=d_{\operatorname{min}}^{(i)}
    $$
    for any $1\leq s\leq m_i$.

    Moreover, $\{f_{11},\cdots,f_{1m_1};\cdots;f_{l1},\cdots,f_{lm_l}\}$ can serve as a biholomorphism from $M$ onto $\mathbb{C}^n$.
\end{corollary}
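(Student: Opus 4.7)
The plan is to transfer Yang's soliton-level result, Theorem \ref{Consoli}, back to $M$ via the K\"ahler-Ricci flow blow-down in Theorem \ref{LongKRflow}(5), using Theorem \ref{thm:B} to match the Lyapunov spectrum $\{\mu_i\}$ with the refined minimal-degree spectrum $\{d_{\min}^{(i)}-1\}$.

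By part (1) of Conjecture \ref{YConj1} (Liu \cite{Liu19}), $\overrightarrow{d_{\min}}$ is realized by some $n$-tuple of polynomial growth holomorphic functions, and any such algebraically independent minimal-degree tuple gives a global biholomorphism $M\to\mathbb{C}^n$; grouping by equal degree gives the notation $\{f_{is}\}_{1\leq i\leq l,\,1\leq s\leq m_i}$ and settles the final clause. Fix $x\in M$. By Theorem \ref{thm:B} the distinct $\mu_i$'s pair bijectively with the distinct $d_{\min}^{(i)}$'s and the multiplicities $m_i$ agree with $\dim V_i(x)$, so it suffices to prove $\partial/\partial f_{is}(x)\in V_i(x)$; independence of $\{\partial/\partial f_{is}(x)\}_{i,s}$ then promotes each sub-collection of $m_i$ vectors to a basis of $V_i(x)$.

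Pick any $t_k\to\infty$ and invoke Theorem \ref{LongKRflow}(5) at $x$: there are diffeomorphisms $F_k:(U_k,O)\to (M,x)$ with $F_k^*(t_k^{-1}g(t_k t))\to h(t)$ smoothly on an expanding gradient K\"ahler-Ricci soliton $(N,J_\infty,h(t),O,f)$ whose Ricci eigenvalues at $O$ are $\mu_i(x)$. Set $\tilde f_{is,k}:=t_k^{-d_{\min}^{(i)}/2}\bigl(F_k^*f_{is}-F_k^*f_{is}(O)\bigr)$. Using the uniform curvature bound $\|\operatorname{Rm}\|\leq C/t$ of Theorem \ref{LongKRflow}(1)(ii), the parabolic supersolution inequality $(\partial_t-\Delta)|\partial f|^2\leq 0$ valid for any holomorphic $f$ under the K\"ahler-Ricci flow, and the three-circle and mean-value machinery of Liu in \cite{Liu19} (the engine behind his Claim 4.2 on simultaneous diagonalization of degrees along a blow-down), I would extract a subsequence along which $\tilde f_{is,k}$ converges locally smoothly to $F_{is}\in\mathcal{O}_P(N,O)$ with $\deg F_{is}\leq d_{\min}^{(i)}$. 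By Theorem \ref{Consoli} on $N$, $\overrightarrow{d_{\min}}(O)=(\mu_1+1,\ldots,\mu_n+1)$, which by Theorem \ref{thm:B} equals $(d_{\min}^{(1)},\ldots,d_{\min}^{(n)})$; no strict degree drop is possible, so in fact $\deg F_{is}=d_{\min}^{(i)}$ and $\{F_{is}\}$ realizes $\overrightarrow{d_{\min}}(O)$ on $N$. On the soliton the holomorphic gradient $X=\nabla^{1,0}f$ fixes $O$, generates a holomorphic $\mathbb{R}_{>0}$-action, and by the arguments of \cite{Yang22} each $F_{is}$ is an $X$-eigenfunction of weight $d_{\min}^{(i)}$; the soliton equation \eqref{eq:soliton} identifies the $X$-weights on $T_O^{(1,0)}N$ with the Ricci eigenvalues shifted by $1$, placing $\partial/\partial F_{is}(O)$ in the $\mu_i$-eigenspace of $\operatorname{Ric}_{h(1)}(O)$. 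Smoothly transporting through $F_k$ and using $(F_k^{-1})_*\partial/\partial f_{is}(x)=t_k^{-d_{\min}^{(i)}/2}\partial/\partial\tilde f_{is,k}(O)$ together with the smooth convergence, $\partial/\partial f_{is}(x)$ becomes an asymptotic eigenvector of $t_k\operatorname{Ric}_{g(t_k)}$ at $x$ with eigenvalue $\mu_i$; since this holds for every blow-down sequence, the Lyapunov regularity in Theorem \ref{LongKRflow}(3) forces $\partial/\partial f_{is}(x)\in V_i(x)$.

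The hardest step is producing the limits $F_{is}$ on $N$ with the correct minimal degrees. The parabolic estimates and three-circle machinery yield subsequential $C^\infty_{\mathrm{loc}}$ limits with $\deg F_{is}\leq d_{\min}^{(i)}$; ruling out a strict drop is exactly where Theorem \ref{thm:B} intervenes, forcing the tuple of limiting degrees on $N$ to coincide with the prescribed refined tuple on $M$ by a rigidity/dimension count. Once this is in hand, the eigenvector identification on the soliton and the smooth transport back are essentially linear-algebraic.
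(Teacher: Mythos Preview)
Your strategy is the same as the paper's: run the K\"ahler--Ricci flow blow-down of Theorem~\ref{LongKRflow}(5), pass the canonical coordinate on $M$ to a coordinate on the soliton $(N,h(1),O)$, identify it there with the Poincar\'e coordinate (which diagonalizes $\operatorname{Ric}_{h(1)}$ at $O$), and transport the eigenspace information back via the smooth convergence. The final ``Moreover'' clause is indeed just Liu's Theorem~\ref{Liuthm1}.

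There is, however, a genuine gap in the middle of your argument. You assert that ``by the arguments of \cite{Yang22} each $F_{is}$ is an $X$-eigenfunction of weight $d_{\min}^{(i)}$''. Yang's result only says that the \emph{Poincar\'e} coordinate functions are $X$-eigenfunctions; it does not say that every $n$-tuple realizing $\overrightarrow{d_{\min}}(O)$ consists of eigenfunctions. Concretely, if $(w_1,w_2)$ is a Poincar\'e coordinate with $\deg w_1<\deg w_2$, then $(F_1,F_2)=(w_1,\,w_2+w_1)$ also realizes $\overrightarrow{d_{\min}}(O)$, yet $F_2$ is not an $X$-eigenfunction, and $\partial/\partial F_1(O)=\partial/\partial w_1(O)-\partial/\partial w_2(O)$ lies in \emph{neither} Ricci eigenspace. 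Your scaling $\tilde f_{is,k}=t_k^{-d_{\min}^{(i)}/2}F_k^*f_{is}$ does nothing to rule this out: it controls degrees but not the cross $L^2$-inner products, so the limiting tuple $\{F_{is}\}$ need not be a canonical coordinate in the sense of Theorem~\ref{Liuthm1}, and $\partial/\partial F_{is}(O)$ need not sit in the $\mu_i$-eigenspace.

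The paper closes exactly this gap by renormalizing at every scale so that $\dashint_{B_{g_k(1)}(p,1)} f_{ik}\overline{f_{jk}}=\delta_{ij}$; then the limits $(f_{1\infty},\ldots,f_{n\infty})$ satisfy the normalization (\ref{normalization}) on $(N,h(1))$, and Theorem~\ref{Liuthm1}(3) says they differ from the Poincar\'e coordinate by a constant orthogonal transformation. Since such a transformation must preserve the degree blocks, it is block-diagonal, and hence $\partial/\partial f_{i\infty}(O)$ genuinely lands in the $\mu_i$-eigenspace. If you replace your pointwise degree scaling by this $L^2$-renormalization (and invoke the shrinking-balls lemma to keep the degrees fixed along the flow, as the paper does), your argument goes through and coincides with the paper's.
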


This paper is organized as follows. Section \ref{sec:2} gives some basic preliminary results and some simple conclusions that will be used later. In Section \ref{sec:3}, we will prove Theorem \ref{thm:B} by noting that any tangent cone of $M$ also arises as a tangent cone of corresponding expanding K\"ahler-Ricci solitons described in Theorem \ref{LongKRflow}, we then deduce Corollary \ref{bridge}. Section \ref{sec:4} contains the proof of Theorem \ref{thm:A}. In Section \ref{sec:5} we discuss several related problems.
\section{Preliminary results}\label{sec:2}
\subsection{Structure of K\"ahler manifolds with maximal volume growth and tangent cones}  We first collect several results on the structure of complete noncompact K\"ahler manifolds with nonnegative bisectional curvature and maximal volume growth and their tangent cones, and prove some relevant facts that will be used later.

The following is the main theorem in \cite{Liu19}.
\begin{theorem}\label{Liuthm1}
    Let $(M^n,g)$ be a complete noncompact K\"ahler manifold with nonnegative bisectional curvature and maximal volume growth, $p\in M$. Let $\operatorname{AVR}(M,g)=\nu>0$.
    Then
    
    (1) $M$ is biholomorphic to $\mathbb{C}^n$.

    (2) There exists an n-tuple of polynomial growth holomorphic functions $(f_1,\cdots,f_n)$ serving as a proper biholomorphism to $\mathbb{C}^n$. These functions satisfy
    \begin{equation}\label{normalization}
\dashint_{B(p,1)}f_i\overline{f_j}=\delta_{ij},\quad f_{i}(p)=0\quad  \forall i,j\in \{1,\cdots,n\}.
    \end{equation}
    Moreover, any element in $\mathcal{O}_P(M)$ is a polynomial of these functions, meaning that $\mathcal{O}_P(M)\cong \mathbb{C}[f_1,\cdots,f_n]$.

    (3) Any two such global coordinates (consisting of n polynomial growth holomorphic functions satisfying (\ref{normalization}), serving as a proper biholomorphism onto $\mathbb{C}^n$, and generating $\mathcal{O}_P(M)$) differ by a constant orthogonal transformation. 
\end{theorem}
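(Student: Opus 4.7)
The plan is to follow the strategy of Liu in \cite{Liu19}, whose heart is the interplay between tangent-cone analysis (Cheeger-Colding-Tian) and the three-circle theorem from \cite{Liu16c}. First I would invoke the three-circle theorem: for any holomorphic function $f$ on $M$, the quantity $\log M_f(r) := \log \sup_{B(p,r)} |f|$ is convex and nondecreasing as a function of $\log r$. This already has two consequences used everywhere in what follows. It forces $\dim_{\mathbb{C}} \mathcal{O}_d(M) < \infty$ for each $d$, where $\mathcal{O}_d(M)$ is the subspace of $\mathcal{O}_P(M)$ consisting of functions of degree at most $d$; and it gives matching lower bounds of the form $M_f(r) \geq c\, r^{\deg f}$ for large $r$, so that polynomial growth functions cannot vanish too fast at infinity.

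Next I would construct $n$ algebraically independent polynomial growth holomorphic functions $f_1,\ldots,f_n$ on $M$ whose differentials span $T_p^*M$. The construction proceeds on a tangent cone $(M_\infty,p_\infty)=C(Y)$, which by Cheeger-Colding-Tian is a normal affine variety and, by the Kähler structure and the results that produce holomorphic coordinates on the regular part, admits enough homogeneous holomorphic functions of prescribed degrees. I would then pull these back to holomorphic functions on large balls in $M$ via Hörmander's $L^2$-estimate with weights adapted to the rescaled metrics $r_k^{-2}g$, then pass to a diagonal subsequence; the three-circle theorem converts the $L^2$-bounds on balls into global polynomial growth on $M$. Picking representatives of minimal degree in each relevant cohomology/weight class yields a candidate coordinate system $(f_1,\ldots,f_n)$ after a Gram-Schmidt style orthonormalization to arrange the normalization \eqref{normalization}.

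Then I would show that $F=(f_1,\ldots,f_n)\colon M\to \mathbb{C}^n$ is a proper biholomorphism. Properness uses the three-circle lower bound: on $M\setminus B(p,r)$ at least one $|f_i|$ must grow polynomially, so $F^{-1}$ of a bounded set is bounded. Local invertibility at $p$ is the construction, and local invertibility everywhere follows by a rigidity argument on the generic fiber combined with the fact that $F$ has constant degree. Dimension counting (comparing $\dim \mathcal{O}_d(M)$ to $\dim \{\text{polynomials of degree } \leq d\}$ on $\mathbb{C}^n$) together with the three-circle theorem forces this degree to be one, so $F$ is injective, and then properness plus the same degree one statement gives surjectivity. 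Any $h\in \mathcal{O}_P(M)$ pushes forward to a polynomial growth entire function on $\mathbb{C}^n$, hence a polynomial; this gives $\mathcal{O}_P(M)\cong \mathbb{C}[f_1,\ldots,f_n]$. For part (3), if $(g_1,\ldots,g_n)$ is another such system then each $g_i$ is a polynomial in the $f_j$'s of the same minimal degree, hence linear; the normalization \eqref{normalization} turns the change-of-basis matrix into an element of the unitary group.

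The main obstacle, and the technical core of Liu's proof, is the construction step: producing, on $M$ itself, holomorphic functions whose degree on $M$ matches a prescribed degree on a tangent cone $C(Y)$ and whose differentials at $p$ are prescribed. The danger is that $L^2$-solutions of $\bar\partial$ on $M$ may acquire extra growth when transplanted from the cone, or may degenerate when one passes to the limit along the rescaled sequence. Controlling this requires uniform Hörmander estimates under collapsing, the three-circle theorem to convert ball-wise bounds into global polynomial growth with the correct exponent, and a careful selection argument to keep the candidate functions linearly independent in the limit. Everything else — properness, degree one, generation of $\mathcal{O}_P(M)$, uniqueness — is a relatively formal consequence of the three-circle theorem once these coordinates are in hand.
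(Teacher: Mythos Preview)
Your overall strategy is aligned with Liu's and with the paper's, but two of your steps contain genuine gaps, and the paper handles them differently.

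First, your properness argument does not work as written. The three-circle theorem gives a lower bound on $M_{f_i}(r)=\sup_{B(p,r)}|f_i|$, not on $\min_{\partial B(p,r)}|f_i|$ or on $\min_{\partial B(p,r)}\sum_i|f_i|^2$; knowing that some $|f_i|$ is large \emph{somewhere} on each large sphere does not rule out a sequence $x_k\to\infty$ with all $f_i(x_k)$ bounded, so ``$F^{-1}$ of a bounded set is bounded'' does not follow. The paper instead imports from \cite{Liu16a} the fact that for a suitable $D=D(n,\nu)$ an orthonormal basis $(g_1,\ldots,g_k)$ of $\mathcal{O}_D(M)$ satisfies $\min_{\partial B(p,r)}\sum_i|g_i|^2\ge c(n,\nu)\,r^2$, which already gives a proper embedding into $\mathbb{C}^k$. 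Since each $g_i$ is a polynomial of degree at most $D$ in the chosen $(f_1,\ldots,f_n)$ (by the three-circle theorem), a contradiction argument then transfers this to $\min_{\partial B(p,r)}\sum_i|f_i|^2\ge c(n,\nu)\,r^{2/D}$, which is the properness of $F$. Properness is genuinely an extra input beyond the three-circle sup bound.

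Second, your proof of (3) contains a non sequitur: you write that each $g_i$ is ``a polynomial in the $f_j$'s of the same minimal degree, hence linear'', but equal polynomial-growth degree does not force linearity in the $f_j$. For instance, if $d_{\min}^{(1)}=1$ and $d_{\min}^{(2)}=2$ then $f_2+f_1^2$ has growth degree $2$ yet is quadratic in the $f_j$. The paper argues differently: since both coordinate systems generate $\mathcal{O}_P(M)$, both Jacobian matrices $(\partial h_i/\partial f_j)$ and $(\partial f_i/\partial h_j)$ are polynomial matrices in $(f_1,\ldots,f_n)$ and are inverse to each other; together with the nondecreasing arrangement of degrees the paper concludes they are constant, and then the normalization \eqref{normalization} forces the change of basis to be orthogonal. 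That Jacobian argument, not a bare appeal to ``same degree'', is the correct replacement for your ``hence linear'' step.
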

\begin{proof}
    The proof of these conclusions is implicitly contained in \cite{Liu16a,Liu19}. For reader's convenience, we give a thorough explanation. The proof of (1) and (2) is contained in Section 4 in \cite{Liu19}.
    
    The properness is also contained in \cite{Liu16a}. It's shown that there exists a constant $D=D(n,\nu)$ such that let $(g_1,\cdots, g_k)$ be a linearly independent basis of $\mathcal{O}_D(M)$, where $k=\operatorname{dim}_{\mathbb{C}}\mathcal{O}_D(M)-1$, such that
    $$
    \dashint_{B(p,1)}g_i\overline{g_j}=\delta_{ij},\quad g_{i}(p)=0\quad \forall i,j\in \{1,\cdots,k\}.
    $$
    and
    $$
    \min\limits_{\partial B(p,r)}\sum\limits_{i=1}^k|g_i|^2\geq c(n,\nu)r^2
    $$
    holds for any $r>0$ and some constant $c=c(n,\nu)$. Moreover, $(g_1,\cdots,g_k)$ serves as an embedding of $M$ onto an affine variety in $\mathbb{C}^{k}$.

    Take an $n$-tuple of algebraically independent holomorphic functions with minimal degrees in $(g_1,\cdots,g_k)$, denoted by $(f_1,\cdots,f_n)$. Following the proof in Section 4 in \cite{Liu19}, $(f_1,\cdots,f_n)$ serves as a biholomorphism of $M$ onto $\mathbb{C}^n$, and any polynomial growth holomorphic function is a polynomial of them.

    Since any $g_i$ is a polynomial of these $(f_1,\cdots,f_n)$ with degree at most $D$ according to the three-circle theorem, there exists a constant $C$ such that
    $$
     c(n,\nu)r^2\leq \min\limits_{\partial B(p,r)}\sum\limits_{i=1}^k|g_i|^2\leq C\left(\min\limits_{\partial B(p,r)}\sum\limits_{i=1}^n|f_i|^2\right)^D\quad \text{for any } r>0,
    $$
    from which we can deduce
    \begin{equation}\label{propercoord}
    \min\limits_{\partial B(p,r)}\sum\limits_{i=1}^n|f_i|^2\geq c(n,\nu)r^{\frac{2}{D}}
    \end{equation}
    for any $r>0$ and some constant $c=c(n,\nu)$. In other words, $(f_1,\cdots,f_n)$ is proper.

    For (3), given any two such global coordinates $(f_1,\cdots,f_n)$ and $(h_1,\cdots,h_n)$. Assume their degrees are both arranged in the nondecreasing order, then $\left(\frac{\partial h_i}{\partial f_j}\right)$ and $\left(\frac{\partial f_i}{\partial h_j}\right)$ are both polynomial matrices of $(f_1,\cdots,f_n)$ and are inverse of each other. Therefore they must both be constant matrices. Then due to the normalization condition (\ref{normalization}), these two matrices are both orthogonal. 
\end{proof}
In fact, the minimal degrees of the coordinate in Theorem \ref{Liuthm1} (3) described above are just the refined minimal order $\overrightarrow{d_{\operatorname{min}}}$ defined in the Introduction, and this has been shown in \cite{Liu19}.
   \begin{corollary}\label{refmini}
       Conjecture \ref{YConj1} (1) holds.
   \end{corollary}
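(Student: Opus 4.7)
The plan is to identify $\overrightarrow{d_{\min}}(p)$ with $(d_1,\ldots,d_n)$, where $d_i:=\deg(f_i)$ and $(f_1,\ldots,f_n)$ is the proper biholomorphism $M\to \mathbb{C}^n$ produced by Theorem \ref{Liuthm1}, arranged in non-decreasing degree order. Since the $d_i$ arise from Liu's algorithm of picking algebraically independent generators of minimal degree from $\mathcal{O}_D(M)$, they depend only on the filtered algebra $\mathcal{O}_P(M)$ and not on $p$; establishing $d_{\min}^{(k)}(p)=d_k$ for every $p$ therefore yields both the realization statement and the $p$-independence simultaneously.

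For the upper bound $d_{\min}^{(k)}(p)\leq d_k$: given any $p$, the shifted tuple $\tilde f_i := f_i - f_i(p)$ is still a proper biholomorphism $M\to \mathbb{C}^n$, vanishes at $p$, provides local coordinates there, and satisfies $\deg(\tilde f_i)=d_i$ since a translation on $\mathbb{C}^n$ does not affect polynomial growth. Hence $(\tilde f_1,\ldots,\tilde f_n)\in \mathcal{O}_P(M,p)$ realizes $(d_1,\ldots,d_n)$. For the lower bound $d_{\min}^{(k)}(p)\geq d_k$, let $(h_1,\ldots,h_n)\in \mathcal{O}_P(M,p)$ have non-decreasing degrees. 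The local coordinate condition forces functional, hence algebraic, independence of the $h_j$ in $\mathcal{O}_P(M)\cong\mathbb{C}[f_1,\ldots,f_n]$. The argument then reduces to the filtration inclusion
$$
\mathcal{O}_{<d_k}(M)\subset \mathbb{C}[f_1,\ldots,f_{k-1}],
$$
whose right-hand side is a polynomial ring of transcendence degree $k-1$: this prevents $h_1,\ldots,h_k$ from all having degree strictly less than $d_k$, yielding $\deg h_k\geq d_k$.

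The main obstacle is establishing this inclusion, which is equivalent to showing that the $M$-degree of a polynomial $P(f_1,\ldots,f_n)=\sum_\alpha c_\alpha f^\alpha$ equals its weighted degree $\max\{\sum_i\alpha_i d_i:c_\alpha\neq 0\}$ with weights $d_i$. The bound ``$\leq$'' is immediate from $|f_i(x)|\leq C\, d(x,p)^{d_i+\epsilon}$. For the reverse inequality, I would combine the three-circle theorem of \cite{Liu16c} with the properness estimate (\ref{propercoord}) to construct sequences $x_m\to\infty$, along generic radial directions in $\mathbb{C}^n$ transported via $F^{-1}$, on which $|f_i(x_m)|\sim d(x_m,p)^{d_i}$ simultaneously for all $i$; on such sequences the leading weighted-homogeneous part of $P$ dominates and cancellation among terms of maximal weighted degree is ruled out. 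An alternative route is to rescale to a tangent cone and use the description of \cite{Lott21,Liu19,Liu24}, in which the $f_i$ converge (after rescaling by $\lambda^{d_i}$) to weighted-homogeneous polynomials of weighted degree $d_i$ on $\mathbb{C}^n$; the leading part of $P$ then survives as a nonzero polynomial in the limit, producing the required lower bound on $\deg_M P$.

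Once the inclusion is verified, both parts of the statement follow in one stroke: $d_{\min}^{(k)}(p)=d_k$ is realized by the shifted tuple $(\tilde f_1,\ldots,\tilde f_n)$, and the right-hand side $d_k$ is visibly independent of $p$. The heart of the proof lies in the ``no cancellation'' property of weighted degrees, which is precisely what separates the intrinsic algebraic filtration from its apparent $p$-dependence.
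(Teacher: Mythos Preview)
Your proposal is correct and follows the same overall strategy as the paper: both identify $\overrightarrow{d_{\min}}(p)$ with the degree vector of Liu's canonical coordinate from Theorem~\ref{Liuthm1}, and both ultimately rest on the filtered structure of $\mathcal{O}_P(M)$ being intrinsic to $M$. The paper's argument is terser: it builds the canonical coordinate directly from the jump filtration $\mathcal{O}_P(M)=\mathbb{C}\oplus(\mathcal{O}_{d_1}/\mathbb{C})\oplus\cdots$, asserts that $\overrightarrow{d_{\min}}$ coincides with the resulting degree vector essentially ``by definition'', and then invokes the uniqueness statement Theorem~\ref{Liuthm1}~(3) (any two canonical coordinates differ by an orthogonal transformation) to get $p$-independence in one line. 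You instead split into explicit upper and lower bounds and correctly isolate the ``no cancellation'' identity $\deg P(f_1,\ldots,f_n)=\max\{\sum_i\alpha_i d_i:c_\alpha\neq 0\}$ as the substantive step behind the inclusion $\mathcal{O}_{<d_k}(M)\subset\mathbb{C}[f_1,\ldots,f_{k-1}]$; your tangent-cone route~(b) to this identity is precisely what the paper carries out later in Corollary~\ref{homogeneity}. Two minor remarks: the shift $\tilde f_i=f_i-f_i(p)$ is harmless but unnecessary, since $\mathcal{O}_P(M,p)$ only asks for local coordinates at $p$, not vanishing there; and the paper also records the nonmaximal-volume-growth case separately, where $\mathcal{O}_P(M)=\mathbb{C}$ forces $\overrightarrow{d_{\min}}=\overrightarrow{+\infty}$.
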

   \begin{proof}
       If $M$ is of nonmaximal volume growth. By Theorem 2 in \cite{Liu16b}, $\mathcal{O}_{P}(M)=\mathbb{C}$, i.e. $\overrightarrow{d_{\operatorname{min}}}=\overrightarrow{+\infty}$, which is independent of the choice of $p$.

       If $M$ is of maximal volume growth, due to \cite{Liu19}, there exists a strictly increasing sequence $1\leq d_1<d_2<\cdots$, so that $\mathcal{O}_P(M)=\mathbb{C}\oplus \left(\mathcal{O}_{d_1}(M)/\mathbb{C}\right)\oplus\left(\mathcal{O}_{d_2}(M)/\mathcal{O}_{d_1}(M)\right)\oplus\cdots$ as a complex vector space. For any $k\in \mathbb{N}$, pick a maximal linearly independent vectors $f_{k1},\cdots,f_{km_k}$ of $\mathcal{O}_{d_k}(M)$ so that they form a basis of $\mathcal{O}_{d_k}(M)/\mathcal{O}_{d_{k-1}}(M)$ as quotient of vector spaces and no element in the span is given by polynomials of $\mathcal{O}_{d_{k-1}}(M)$. Then the first $n$ functions $\{f_{11},\cdots,f_{1m_1},f_{21},\cdots\}$ form a global coordinate in Theorem \ref{Liuthm1} after normalization as (\ref{normalization}).

       By the definition of the refined minimal degree, $\overrightarrow{d_{\operatorname{min}}}$ is just the n-tuple of degrees of this global coordinate $\{f_{11},\cdots,f_{1m_1},f_{21},\cdots\}$, which depends only on $M$ itself by (3) in Theorem \ref{Liuthm1}.
   \end{proof}
   \begin{remark}
       (1) As can be seen in the proof, if $M$ is of maximal volume growth, Corollary \ref{refmini} still holds without the nonsplitting condition on the universal cover.

       (2) For simplicity, if $M$ is of maximal volume growth, we just call the coordinate as in (2) in Theorem \ref{Liuthm1} a \textbf{canonical coordinate} on $M$, with degrees $\left(d_{\operatorname{min}}^{(1)}, \cdots, d_{\operatorname{min}}^{(n)}\right)$. And we arrange them in non-decreasing order of their degrees.
   \end{remark}

Now we recall the definition of ``$\operatorname{BK}\ge 0$'' in Lott \cite{Lott21}, for the purpose we are using here, we will only consider the case of complex manifolds. 
\begin{definition}\label{lott}
(1) Let $X$ be a complex manifold. Let $\{U_i\}_{i\in I}$ be an open covering of $X$, if on each $U_i$ there exists a continuous plurisubharmonic function $\phi_i$, so that $\phi_i-\phi_j$ is pluriharmonic on each $U_i\cap U_j\neq \varnothing$. Then $\omega=\sqrt{-1}\partial\bar{\partial}\phi_i$ on $U_i$ is a well-defined positive (1,1)-current, and we will call it a K\"ahler current on $X$.

(2) Let $d$ be a metric on such $(X,\omega)$, we say that the metric $d$ is compatible with the K\"ahler current $\omega$, if the metric $d$ induces the topology of $X$, so that if $\Sigma$ is an embedded holomorphic disk, then $\omega|_{\Sigma}$ equals the 2-dimensional Hausdorff measure induced by $d$ on each $\Sigma \cap U_i \neq \varnothing$.

(3) Given any triple $(X,\omega,d)$ with $X$ a complex manifold, $\omega$ a K\"ahler current, $d$ a metric compatible with $\omega$. We call $X$ has ``$\operatorname{BK}\ge 0$'' if for any $i\in I$, $\phi_i-d_p^2/2$ is plurisubharmonic on $U_i$ for any $p\in X$, here $d_p$ is the distance function from $p$. In fact, if $(X,\omega,d)$ is a smooth K\"ahler manifold, the condition ``$\operatorname{BK}\ge 0$'' is equivalent to that bisectional curvature is nonnegative.
\end{definition}

The structure of tangent cones of infinity of $M$ has been studied in \cite{Liu18a,Liu18b,LT21,Lott21,Liu24}. Let's summarize these results in the following:

   \begin{theorem}\label{infinitystrucutre}
       Let $(M^n,g)$ be a complete noncompact K\"ahler manifold with nonnegative bisectional curvature and maximal volume growth. Let $(M_{\infty},p_{\infty},d_{\infty})$ be a tangent cone at infinity of $M$, then:
       
       (1) $(M_{\infty},p_{\infty},d_{\infty})$ is a complex manifold which is biholomorphic to $\mathbb{C}^n$. Moreover, it has "$BK\geq 0$" with K\"ahler current $\omega=\frac{\sqrt{-1}}{2}\partial\bar{\partial}r^2$, where $r$ is the radial function on $M_{\infty}$.

       (2) There exists a complete family of expanding gradient K\"ahler-Ricci solitons coming out of $M_{\infty}$, i.e. there exists $(M_{\infty},h(t))$ a family of expanding gradient K\"ahler-Ricci soliton metrics with nonnegative bisectional curvature such that $h(t)\rightarrow d_{\infty}$ as $t\rightarrow 0$ in the Gromov-Hausdorff sense.
       
       (3) $\operatorname{dim}_{\mathbb{C}}\mathcal{O}_{d}(M_{\infty})=\operatorname{dim}_{\mathbb{C}}\mathcal{O}_{d}(M)$ for any $d\geq 0$. And there exists a n-tuple of polynomial growth functions $(z_{1\infty},\cdots,z_{n\infty})$ serving as a proper biholomorphism onto $\mathbb{C}^n$. Each $z_{i\infty}$ is of homogeneous degree $d_{\operatorname{min}}^{(i)}$. They satisfy
    \begin{equation}\label{normalizationcone}
\dashint_{B(p_{\infty},1)}z_{i\infty}\overline{z_{j\infty}}=\delta_{ij},\quad z_{i\infty}(p_{\infty})=0\quad \forall i,j\in \{1,\cdots,n\}.
    \end{equation}
    Moreover, any element in $\mathcal{O}_P(M_{\infty})$ is a polynomial of these functions. In particular, $\mathcal{O}_P(M_{\infty})\cong \mathbb{C}[z_{1\infty},\cdots,z_{n\infty}]$.

    (4) Any two such global coordinates (consisting of n polynomial growth homothetically homogeneous, holomorphic functions satisfying (\ref{normalizationcone}), serving as a proper biholomorphism onto $\mathbb{C}^n$, and generating $\mathcal{O}_P(M_{\infty})$) differ by a constant orthogonal transformation.

    (5) Suppose there is a blow-down sequence $(M_i,g_i,p_i)=(M,r_i^{-2}g,p)$ that converges to $(M_\infty,d_{\infty},p_\infty)$ in the Gromov-Hausdorff sense. Let $(z_1,\cdots,z_n)$ be a canonical coordinate on $M$. Normalize them to $(z_{1k},\cdots,z_{nk})$ on $M_k$ such that
    \begin{equation}\label{normalizationsequence}
     \dashint_{B(p_{k},1)}z_{ik}\overline{z_{jk}}=\delta_{ij},\quad z_{ik}(p_{k})=0\quad \forall i,j\in \{1,\cdots,n\}, k\in \mathbb{N}.
    \end{equation}
    Then $(z_{1k},\cdots,z_{1n})$ sub-sequentially converges to a global coordinate $(z_{1\infty},\cdots,z_{n\infty})$ as in (3) on $M_{\infty}$ uniformly on compact subsets as $k\rightarrow +\infty$. 

    (6) The volume of the unit ball in $M_{\infty}$ is $\frac{\omega_{2n}}{d_{\operatorname{min}}^{(1)}\cdots d_{\operatorname{min}}^{(n)}}$. In particular, Conjecture \ref{YConj1} (2) holds.
   \end{theorem}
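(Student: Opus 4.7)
The plan is to assemble parts (1)--(5) from the cited works \cite{Liu18a,Liu18b,LT21,Lott21,Liu24}, supplying only the bookkeeping needed to piece them together, and then to derive (6) as a short consequence. For (1), the results of \cite{Liu18a,Liu18b} show that any tangent cone at infinity is a normal affine variety with at worst an isolated singularity at the vertex; since the regular part inherits nonnegative bisectional curvature and maximal volume growth, Liu's theorem from \cite{Liu19} applied to $M_\infty$ (in the spirit of our Theorem \ref{Liuthm1}) gives the biholomorphism to $\mathbb{C}^n$. The form $\omega=\frac{\sqrt{-1}}{2}\partial\bar\partial r^2$ is the defining condition of a metric K\"ahler cone in Lott's framework, and the preservation of ``$\operatorname{BK}\ge 0$'' under Gromov-Hausdorff limits is the main theorem of \cite{Lott21}. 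Part (2) is the pyramid expanding soliton construction of \cite{LT21}, applied to the limiting cone as initial data.

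For (3)--(4), the key input is preservation of $\dim_{\mathbb{C}}\mathcal{O}_d$ under noncollapsed Gromov-Hausdorff limits, which follows from H\"ormander's $L^2$-estimate combined with the three-circle theorem as developed in \cite{Liu16a,Liu19}. The radial scaling $r\mapsto\lambda r$ on the cone decomposes $\mathcal{O}_P(M_\infty)$ into eigenspaces of homothetically homogeneous functions; choosing an algebraically independent $n$-tuple of minimal-degree homogeneous representatives and running the argument of Theorem \ref{Liuthm1} produces the proper biholomorphism $(z_{1\infty},\ldots,z_{n\infty})\colon M_\infty\to\mathbb{C}^n$, with each $z_{i\infty}$ homogeneous of degree $d_{\min}^{(i)}$ by construction. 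Uniqueness up to an orthogonal transformation in (4) is a direct analogue of Theorem \ref{Liuthm1} (3): two such coordinates are related by a polynomial change of variables whose entries must be weighted-homogeneous of matching degree; a weight count forces the change to be linear, and the normalization (\ref{normalizationcone}) then turns it into an orthogonal transformation.

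For (5), the plan is to apply the compactness-convergence theory of polynomial growth holomorphic functions from \cite{Liu16a,Liu19}. The normalization (\ref{normalizationsequence}) combined with the three-circle theorem gives uniform $C^0$ bounds on the $z_{ik}$ on compact sets of $(M_k,g_k)$; a Cheng-Yau gradient estimate followed by a diagonal argument then extracts a subsequence converging in $C^\infty_{\mathrm{loc}}$ to $n$ polynomial growth holomorphic functions on $M_\infty$. Noncollapsed volume convergence sends (\ref{normalizationsequence}) to (\ref{normalizationcone}), and parts (3)--(4) identify the limit as a canonical coordinate on $M_\infty$.

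Finally, (6) is a short cone computation in the canonical coordinate: under the radial scaling $r\mapsto\lambda r$ one has $z_{i\infty}\mapsto\lambda^{d_{\min}^{(i)}}z_{i\infty}$, so the standard weighted-homogeneous volume computation for metric K\"ahler cones yields $\operatorname{Vol}_{\omega}(B(p_\infty,1))=\omega_{2n}/\prod_{i=1}^n d_{\min}^{(i)}$. Noncollapsed volume convergence identifies this with $\operatorname{AVR}(M,g)\cdot\omega_{2n}$, giving Conjecture \ref{YConj1} (2). The main technical obstacle in the whole scheme is making this volume comparison rigorous in the setting where $\omega$ is only a positive current on $M_\infty$ rather than a smooth K\"ahler metric; this is precisely the content of Liu's recent work \cite{Liu24}, which I invoke directly.
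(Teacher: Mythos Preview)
Your outline is broadly correct in shape, but there is one genuine missing idea in part (3). You write that ``the radial scaling $r\mapsto\lambda r$ on the cone decomposes $\mathcal{O}_P(M_\infty)$ into eigenspaces of homothetically homogeneous functions,'' and all your subsequent weight-counting and homogeneity arguments in (3)--(4) rest on this. But for the radial scaling to act on holomorphic functions at all, you need the homothetic vector field $r\frac{\partial}{\partial r}$ to be \emph{holomorphic} on $M_\infty$, and this is not automatic: the complex structure $J_\infty$ on $M_\infty$ comes from a Gromov--Hausdorff limit, while $r\frac{\partial}{\partial r}$ is defined purely metrically from the cone structure, and there is no a priori compatibility between the two. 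The paper proves this as an explicit claim, and the argument goes through the K\"ahler--Ricci flow: by Theorem \ref{LongKRflow} the cone $(M_\infty,J_\infty)$ is a pointed Cheeger--Hamilton limit of parabolic blow-downs $(M,J,\tfrac{1}{t_i}g(t_it))$, so $J_\infty$ is a smooth limit of the fixed $J$; one then invokes Claim 4.3 of \cite{Liu19} to conclude that the complexification of $r\frac{\partial}{\partial r}$ lies in the span of holomorphic vector fields and preserves each $\mathcal{O}_d(M_\infty)$. Only after this can one speak of homogeneous holomorphic functions, apply \cite{RR88}, and run the Section~4 argument of \cite{Liu19} with $X$ replaced by $r\frac{\partial}{\partial r}$.

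Two smaller points. For (1), your plan to apply Theorem \ref{Liuthm1} ``to $M_\infty$'' does not go through as stated: the limit metric on $M_\infty$ is only a K\"ahler current in Lott's sense, not a smooth K\"ahler metric, so classical bisectional curvature is not defined on the regular part and Theorem \ref{Liuthm1} cannot be invoked directly. The paper simply cites Proposition 6.1 of \cite{Lott21} for both (1) and (2). For (5), your sketch omits the step that keeps the limit $n$-tuple $(z_{1\infty},\ldots,z_{n\infty})$ algebraically independent with the exact degrees $d_{\min}^{(i)}$ (a priori the degrees could drop under the limit). The paper handles this by a contradiction argument using the uniform properness estimate (\ref{propercoord}), which passes to the cone as (\ref{propercone}) and forces any polynomial relation among the limits to lift to a near-vanishing polynomial on arbitrarily large balls of $\mathbb{C}^n$; algebraic independence then combines with the dimension count $\dim_{\mathbb{C}}\mathcal{O}_d(M_\infty)=\dim_{\mathbb{C}}\mathcal{O}_d(M)$ to pin each degree at exactly $d_{\min}^{(i)}$.
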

   \begin{proof}
       (1) and (2) are proved in Proposition 6.1 in \cite{Lott21}. The coming out of a family of expanding gradient K\"ahler-Ricci solitons follows from Proposition 3.2 of Chau-Tam \cite{CT06} using Cao’s harnack inequality.

       For (3), for any $d\geq 0$, $\operatorname{dim}_{\mathbb{C}}\mathcal{O}_{d}(M_{\infty})=\operatorname{dim}_{\mathbb{C}}\mathcal{O}_{d}(M)$ by Proposition 4.1 in \cite{Liu19}. Next we show the existence of such coordinate.

       Firstly we claim that the homothetic vector field $r\frac{\partial}{\partial r}$ is holomorphic on $M_{\infty}$. By Theorem \ref{LongKRflow} (1) and (5), there exists a long-time K\"ahler-Ricci flow solution $g(t)$ on $M$ such that $(M_\infty,J_{\infty},p_{\infty},h(t))$ is the pointed Cheeger-Hamilton limit of a blow-down sequence of $g(t)$, i.e. $(M,J,p,g_i(t)=\frac{1}{t_i}g(t_it))$ for some sequence $t_i\rightarrow +\infty$. In other words, $J$ converges to $J_{\infty}$ through a sequence of diffeomorphisms exhausted on $M_{\infty}$. By Claim 4.3 in \cite{Liu19}, the complexification of $r\frac{\partial}{\partial r}$ is in the span of several holomorphic vector fields on $M_\infty$. In particular, $r\frac{\partial}{\partial r}$ is smooth and real holomorphic. Moreover, $(r\frac{\partial}{\partial r})\mathcal{O}_d(M_\infty)\subset \mathcal{O}_d(M_\infty)$ holds for any $d\geq 0$. The claim is confirmed.

       Therefore, $r\frac{\partial}{\partial r}$ is a contracted holomorphic field on $M_\infty$, from a result in \cite{RR88}, $M_\infty$ is biholomorphic to $\mathbb{C}^n$. Now following a similar argument to that in Section 4 of \cite{Liu19}, replacing the holomorphic vector field $X$ in the proof by $r\frac{\partial}{\partial r}$, we conclude the proof of (3) except the homogenity of these coordinate functions. This is also obvious since we can just choose the highest order terms of each coordinate functions and they certainly form a new biholomorphism onto $\mathbb{C}^n$. In fact, as in the proof of Corollary \ref{refmini}, take any $n$-tuple of algebraically independent homogeneous functions in $\mathcal{O}_P(M_{\infty})$ of minimal degree, they will serve as a biholomorphism onto $\mathbb{C}^n$. Finally we can renomalize them such that (\ref{normalizationcone}) is satisfied. Following the same argument of the proof of (3) in Theorem \ref{Liuthm1}, we get that any two such coordinates has same growth degrees, thus confirms (4). We will show that these coordinates have same degree $(d_{\operatorname{min}}^{(1)},\cdots,d_{\operatorname{min}}^{(n)})$ in (5).
       
       For (5), the mean value inequality \cite{LS84} implies that, 
       $$
       M_{z_{ik}}(\frac{1}{2})=\sup\limits_{B(p_k,\frac{1}{2})}|z_{ik}|\leq C(n),
       $$
       holds for any $1\leq i\leq n$ and $k\in \mathbb{N}$.
       By the three-circle theorem in \cite{Liu16c} and Cheng-Yau gradient estimate \cite{CY75},
       $$
       M_{z_{ik}}(r)\leq C(n)r^{d_{\operatorname{min}}^{(i)}},\quad |dz_{ik}|\leq C(n)r^{d_{\operatorname{min}}^{(i)}-1}
       $$
       on $B_{p_i}(r)$ for any $1\leq i\leq n$, $k\in \mathbb{N}$ and $r\geq \frac{1}{2}$. Arzela-Ascoli theorem implies that $z_{ik}$ converges to a holomorphic function $z_{i\infty}$ of homogeneous degree $d_{\operatorname{min}}^{(i)}$ uniformly on compact subsets. By (\ref{propercoord}) and the Gromov-Hausdorff convergence,
       \begin{equation}\label{propercone}
    \min\limits_{\partial B(p_{\infty},r)}\sum\limits_{i=1}^n |z_{i\infty}|^2\geq c(n,\nu)r^{\frac{2}{D(n,\nu)}}>0.
    \end{equation}
    Now we show these functions are algebraically independent. Suppose not, then there exists a nontrivial polynomial $P\in \mathbb{C}[x_1,\cdots,x_n]$ such that
    $$
    P(z_{1\infty},\cdots,z_{n\infty})\equiv 0
    $$
    on $M_{\infty}$. Lift to $M_k$ for large $k$ and use the properness (\ref{normalizationsequence}) of $(z_{1k},\cdots,z_{nk})$, it implies $|P(z_{1k},\cdots,z_{nk})|$ can be arbitrarily small on arbitrarily large ball in $\mathbb{C}^n$. Thus $P\equiv 0$. Therefore $(z_{1\infty},\cdots,z_{n\infty})$ must form a global coordinate as in (3), and they have degree $(d_{\operatorname{min}}^{(1)},\cdots,d_{\operatorname{min}}^{(n)})$.

    For (6), the first statement is confirmed in the proof of Theorem 3 in page 13 in \cite{Liu24}. This concludes the proof of Conjecture \ref{YConj1} (2) in the maximal volume growth case.
   \end{proof}
   \begin{remark}
       (1) As can be seen in the proof, if $M$ is of maximal volume growth, Conjecture \ref{YConj1} (2) still holds without the nonsplitting condition on the universal cover.

       (2) For simplicity, we just call the coordinate as in (2) in Theorem \ref{infinitystrucutre} a \textbf{canonical coordinate} on $M_{\infty}$, with degrees $\left(d_{\operatorname{min}}^{(1)}, \cdots, d_{\operatorname{min}}^{(n)}\right)$. And we arrange them in non-decreasing order of their degrees.
   \end{remark}

   We have the following homogeneity property of $\mathcal{O}_P(M)$. In particular, $\mathcal{O}_P(M)$ and $\mathcal{O}_P(M_{\infty})$ share the same graded algebraic structure.

\begin{lemma}\label{homogeneity}
Let $(M^n,g)$ be a complete noncompact K\"ahler manifold with nonnegative bisectional curvature and maximal volume growth. Then for any $f,g\in \mathcal{O}_P(M)$, 
$$
\operatorname{deg}(fg)=\operatorname{deg}(f)+\operatorname{deg}(g).
$$
\end{lemma}
\begin{proof}
   Take a canonical coordinate $(f_1,\cdots,f_n)$ on $M$ with 
   $$
   f_i(p)=0\quad \forall i\in\{1,\cdots,n\}.
   $$
   For any sequence $r_k\rightarrow +\infty$, on $(M_k,p_k)=(r^{-2}_kM,p)$, we can normalize them such that
   $$
   f_{ik}=\frac{f_i}{M_{ik}},\quad \text{ where } M_{ik}=\sup\limits_{B_g(p,r_k)}|f_i|
   $$
   for any $i\in\{1,\cdots,n\}$ and $k\in\mathbb{N}$. Therefore on each $M_k$,
   $$
   \sup\limits_{B(p_k,1)}|f_{ik}|=1.
   $$
   Suppose $(M_i,p_i)\rightarrow (M_\infty,p_\infty)$ in the sense of Gromov-Hausdorff convergence, as in Proposition 5 in \cite{Liu18b}, each $f_{ik}\rightarrow f_{i\infty}$ with $f_{i\infty}$ is of homoegeneous degree $d_{\operatorname{min}}^{(i)}$.

   As these functions generate $\mathcal{O}_P(M)$, we just need to prove: for any $f=f_1^{k_1}\cdots f_n^{k_n}$, $\operatorname{deg}(f)=\sum\limits_{i=1}^nk_id_{\operatorname{min}}^{(i)}$.
   
   Now let $f=f_1^{k_1}\cdots f_n^{k_n}$, by the three-circle theorem, for any $a>1$
   $$
   \begin{aligned}
       \lim_{r\to\infty}\frac{M_{f}(ar)}{M_f(r)}=a^{\operatorname{deg}(f)}.
   \end{aligned}
   $$
   On the other hand,
   $$
   \begin{aligned}
   \lim_{r\to\infty}\frac{M_{f}(ar)}{M_f(r)}&=\lim\limits_{r\to\infty}\frac{M_{f_1^{k_1}\cdots f_n^{k_n}}(ar)}{M_{f_1^{k_1}\cdots f_n^{k_n}}(r)}\\
   &=\lim\limits_{k\to\infty}\frac{M_{f_1^{k_1}\cdots f_n^{k_n}}(ar_k)}{M_{f_1^{k_1}\cdots f_n^{k_n}}(r_k)}\\
   &=\lim\limits_{k\to\infty}\frac{M_{f_{1k}^{k_1}\cdots f_{nk}^{k_n}}(a)}{M_{f_{1k}^{k_1}\cdots f_{nk}^{k_n}}(1)}\\
   &=\frac{M_{f_{1\infty}^{k_1}\cdots f_{n\infty}^{k_n}}(a)}{M_{f_{1\infty}^{k_1}\cdots f_{n\infty}^{k_n}}(1)}\\
   &=a^{\sum\limits_{i=1}^nk_id_{\operatorname{min}}^{(i)}}.
   \end{aligned}
   $$

   Comparing these we complete the proof.
\end{proof}

Recently, Chu-Hao \cite{CH25} obtained an optimal rigidity of the dimension estimate for polynomial growth holomorphic functions. Their proof is elementary. Now we give an alternative proof by making use of Liu's deep result in \cite{Liu16a}, which can greatly simplify their proof.

\begin{theorem}[Theorem 1.6 in \cite{CH25}]
    Let $(M^n,g)$ be a complete noncompact K\"ahler manifold with nonnegative bisectional curvature. Suppose there exists $d\geq 1$ such that
    $$
\operatorname{dim}_{\mathbb{C}}\mathcal{O}_d(M)>\operatorname{dim}_{\mathbb{C}}\mathcal{O}_d(\mathbb{C}^n)-\binom{n+d-2}{d-1},
    $$
    then $M$ is biholomorphically isometric to $\mathbb{C}^n$ with Euclidean metric.
\end{theorem}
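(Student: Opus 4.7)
The plan is to pass to a tangent cone at infinity, exploit the canonical weighted coordinates from Theorem \ref{infinitystrucutre}, reduce the entire question to a short combinatorial count on monomial weights, and finish via the rigidity case of Bishop--Gromov volume comparison. The hypothesis is engineered precisely so that this count forces every refined minimal degree $d_{\operatorname{min}}^{(i)}$ to equal $1$, whereupon $\operatorname{AVR}(M,g)=1$ and Euclidean rigidity applies.

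First I would verify that $M$ has maximal volume growth. The hypothesis forces $\dim_{\mathbb{C}}\mathcal{O}_d(M)\geq 2$, so $\mathcal{O}_P(M)\neq \mathbb{C}$; assuming the universal cover does not split (the splitting case will be handled at the end), Theorem \ref{Niconj} gives maximal volume growth. Theorem \ref{infinitystrucutre} then produces a tangent cone $M_\infty$ biholomorphic to $\mathbb{C}^n$ with a canonical coordinate $(z_1,\ldots,z_n)$, where each $z_i$ is homogeneous of weighted degree $d_{\operatorname{min}}^{(i)}$ and $\mathcal{O}_P(M_\infty)=\mathbb{C}[z_1,\ldots,z_n]$. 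Crucially, part (3) of that theorem gives the dimension equality $\dim_{\mathbb{C}}\mathcal{O}_d(M_\infty)=\dim_{\mathbb{C}}\mathcal{O}_d(M)$, so the latter equals the number of multi-indices $\alpha\in \mathbb{Z}_{\geq 0}^n$ satisfying $\sum_i \alpha_i d_{\operatorname{min}}^{(i)}\leq d$.

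The combinatorial core is to show that if some $d_{\operatorname{min}}^{(j)}\geq 2$, then
\[
\dim_{\mathbb{C}}\mathcal{O}_d(M_\infty)\leq \binom{n+d}{d}-\binom{n+d-2}{d-1},
\]
which contradicts the hypothesis. I split the admissible $\alpha$ by $\alpha_j=0$ versus $\alpha_j\geq 1$. Since every $d_{\operatorname{min}}^{(i)}\geq 1$, the $\alpha_j=0$ contribution is bounded by monomials in $n-1$ variables of total degree $\leq d$, i.e.\ by $\binom{n+d-1}{d}$. For $\alpha_j\geq 1$, the substitution $\alpha_j\mapsto \alpha_j-1$ reduces the weight budget to $d-d_{\operatorname{min}}^{(j)}\leq d-2$, giving at most $\binom{n+d-2}{d-2}$ monomials. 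Pascal's identity collapses $\binom{n+d-1}{d}+\binom{n+d-2}{d-2}$ to exactly $\binom{n+d}{d}-\binom{n+d-2}{d-1}$. Hence every $d_{\operatorname{min}}^{(i)}=1$.

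By Theorem \ref{infinitystrucutre}(6), $\operatorname{AVR}(M,g)=\prod_i 1/d_{\operatorname{min}}^{(i)}=1$. Since $g$ has nonnegative Ricci curvature, the rigidity of Bishop--Gromov volume comparison gives a Riemannian isometry $M\to \mathbb{R}^{2n}$; as $g$ is K\"ahler, the complex structure $J$ is $g$-parallel, and on flat Euclidean space parallel endomorphisms are constant, so after a unitary change of coordinates $J$ becomes the standard $J_{\mathrm{std}}$, yielding the desired biholomorphic isometry $(M,g,J)\cong (\mathbb{C}^n, g_{\mathrm{Euc}}, J_{\mathrm{std}})$. The hard part, as I see it, is the splitting case of the universal cover, since the statement carries no non-splitting hypothesis: if $\widetilde M$ splits off an irreducible non-Euclidean factor, one must use a factorwise dimension estimate against Ni's sharp bound to obtain $\dim_{\mathbb{C}}\mathcal{O}_d(M)\leq \binom{n+d}{d}-\binom{n+d-2}{d-1}$ and derive a contradiction, but handling the interaction of several factors uniformly in $d$ is the most delicate step of the argument.
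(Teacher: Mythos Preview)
For the maximal-volume-growth case your argument is essentially the paper's: both reduce to counting weighted monomials $\alpha\in\mathbb{Z}_{\ge0}^n$ with $\sum_i \alpha_i\, d^{(i)}_{\min}\leq d$, show this forces every $d^{(i)}_{\min}=1$, and finish by volume rigidity. The only cosmetic difference is that you pass to a tangent cone and invoke the dimension equality of Theorem~\ref{infinitystrucutre}(3), whereas the paper counts directly on $M$ via Corollary~\ref{homogeneity}.

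There is one slip in your combinatorics: you argue under the hypothesis ``some $d^{(j)}_{\min}\geq 2$'', but the refined minimal degrees are real numbers, so the range $1<d^{(j)}_{\min}<2$ is not covered as written. The fix is immediate: for integer $d$ and any $d^{(j)}_{\min}>1$, after the shift $\alpha_j\mapsto\alpha_j-1$ one has $\sum_i\alpha_i'\leq d-d^{(j)}_{\min}<d-1$, and since $\sum_i\alpha_i'$ is a nonnegative integer this still gives $\sum_i\alpha_i'\leq d-2$, so your bound $\binom{n+d-2}{d-2}$ survives. The paper sidesteps this by instead subtracting from $\dim\mathcal{O}_d(\mathbb{C}^n)$ those $(m_1,\dots,m_n)$ with $m_n\geq 1$ and $\sum_i m_i=d$, an argument that works for any $d^{(n)}_{\min}>1$ without appeal to integrality.

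The genuine gap is the splitting case, which you correctly flag but leave open. The paper resolves it not by a factorwise Ni-type bound but by invoking (from the proof of Corollary~7.1 in \cite{Liu16a}) that $\mathcal{O}_d(M)$ sits inside $\mathcal{O}_d(M_1)$ as the invariant subring under a compact group, where $M_1$ is the maximal-volume-growth factor of $\widetilde M$. Feeding the hypothesis through $\dim\mathcal{O}_d(M)\leq\dim\mathcal{O}_d(M_1)$ and the already-established non-splitting case forces $\widetilde M=\mathbb{C}^n$; then, using that the covering map preserves degrees, the same monomial count applied to a minimal generating set of the $\pi_1(M)$-invariant ring shows the deck group must be trivial. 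Your proposed ``factorwise dimension estimate'' is not the mechanism the paper uses, and you would need this invariant-subring embedding to make the reduction go through.
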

\begin{proof}
    Suppose $M$ is of maximal volume growth, then take $(f_1,\cdots,f_n)$ be a canonical coordinate with minimal degree $(d^{(1)}_{\operatorname{min}},\cdots,d^{(n)}_{\operatorname{min}})$ with $d^{(i)}_{\operatorname{min}}\geq 1$ in non-decreasing order.

    From Lemma \ref{homogeneity}, $\operatorname{dim}_{\mathbb{C}}\mathcal{O}_d(M)=\#\{(m_1,\cdots,m_n)\in \mathbb{N}^n\mid \sum\limits_{i=1}^n m_id^{(i)}_{\min}\leq d\}$. We know that $M$ is biholomorphically isometric to $\mathbb{C}^n$ if and only if $d^{(i)}_{\min}=1$ for all $i$. Suppose $M$ is not isometric to $\mathbb{C}^n$. Then $d^{(n)}_{\min}>1$, then 
    $$
    \begin{aligned}
\operatorname{dim}_{\mathbb{C}}\mathcal{O}_d(M)&=\#\{(m_1,\cdots,m_n)\in \mathbb{N}^n\mid \sum\limits_{i=1}^n m_id^{(i)}_{\min}\leq d\}\\
&\leq \operatorname{dim}_{\mathbb{C}}\mathcal{O}_d(\mathbb{C}^n)-\#\{(m_1,\cdots,m_{n-1})\in \mathbb{N}^{n-1}\mid \sum\limits_{i=1}^n m_i< d\}\\
&=\operatorname{dim}_{\mathbb{C}}\mathcal{O}_d(\mathbb{C}^n)-\binom{n+d-2}{d-1}.
    \end{aligned}
    $$
    with equality holds if and only if $d^{(i)}_{\operatorname{min}}=1$ for $i\leq n-1$ and $1<d^{(n)}_{\operatorname{min}}<\frac{d}{d-1}$. This contradicts with our assumption.

    Now suppose the universal cover $\widetilde{M}$ splits. By the proof of Corollary 7.1 in \cite{Liu16a}, $\widetilde{M}=M_1^{m_1}\times M_2^{n-m_1}$, where $M_1,M_2$ are both simply connected and has nonnegative bisectional curvature. Also $M_1$ is of maximal volume growth. $\mathcal{O}_d(M)$ can be seen as the $G$-invariant subring of $\mathcal{O}_d(M_1)$, $G$ is a compact Lie group acting on $M_1$.

    Therefore 
$$
\begin{aligned}
    \operatorname{dim}_{\mathbb{C}}\mathcal{O}_d(\mathbb{C}^n)-\binom{n+d-2}{d-1}<\operatorname{dim}_{\mathbb{C}}\mathcal{O}_d(M)\leq \operatorname{dim}_{\mathbb{C}}\mathcal{O}_d(M_1),
\end{aligned}
$$
    From the paragraph above, $M_1$ must be biholomorphically isometric to $\mathbb{C}^n$, i.e. $\widetilde{M}=\mathbb{C}^n$. And the $G$-action on $\mathbb{C}^n$ must be the deck transformation of $\pi_1(M)$.

\begin{claim}\label{Claim1}
Let $\pi:\widetilde{M}\rightarrow M$ be the covering map, $f\in \mathcal{O}_P(M)$, then $\operatorname{deg}(f)=\operatorname{deg}(f\circ \pi)$.
\end{claim}
\textbf{Proof of Claim \ref{Claim1}.} It's obvious since $\pi(B(\tilde{p},r))=B(p,r)$ for any $\pi(\tilde{p})=p$ and any $r>0$.
\qed

Since $\mathcal{O}_P(M)$ is finitely generated, take a family of minimal generators of it, say $\{f_1\circ \pi,\cdots,f_N\circ \pi\}\subset \mathcal{O}_P(\mathbb{C}^n)$. By the claim we know that $\operatorname{dim}_{\mathbb{C}}\mathcal{O}_d(M)=\#\{\text{monomial of } f_1\circ \pi,\cdots,f_N\circ\pi \text{ which is in } \mathcal{O}_d(\mathbb{C}^n)\}$.

Similar argument as above shows that 
$$
\operatorname{dim}_{\mathbb{C}}\mathcal{O}_d(M)\leq \operatorname{dim}_{\mathbb{C}}\mathcal{O}_d(\mathbb{C}^n)-\binom{n+d-2}{d-1}
$$
unless $\{f_1\circ \pi,\cdots,f_N\circ \pi\}=A\{z_1,\cdots,z_n\}+b$ for some constant $A,b$, in which case $\pi_1(M)$ is trivial. Therefore we complete the proof.
\end{proof}

\begin{remark}
 Similar argument can also prove the splitting theorem of Section 3 in \cite{CH25}, by using Theorem 4.1 in \cite{NT03}.
\end{remark}

\subsection{Expanding gradient K\"ahler-Ricci solitons with nonnegative Ricci curvature} 
\begin{definition}
    A K\"ahler-Ricci soliton consists of a triple $(M,g,X)$, where $M$ is a K\"ahler manifold, $X$ is a holomorphic vector field on $M$, and $g$ is a complete K\"ahler metric on $M$ whose K\"ahler form $\omega$ satisfies
    $$
    \operatorname{Ric}-\frac{1}{2}\mathcal{L}_X\omega+\lambda\omega=0
    $$
    for some $\lambda\in \{-1,0,1\}$. A K\"ahler-Ricci soliton is said to be expanding if $\lambda =1$. Here $X$ is called the soliton vector field. If in addition, $X=\nabla^g f$ for some real-valued smooth function $f$ on $M$, then we say $(M,g,X)$ is a gradient K\"ahler-Ricci soliton. In this case, we call $f$ the potential function of the soliton. It's equivalent to say
    $$
    \operatorname{Ric}_{i\bar{j}}-f_{i\bar{j}}+\lambda g_{i\bar{j}}=0,\quad f_{ij}=f_{\bar{i}\bar{j}}=0.
    $$
\end{definition}

Then we list some important properties of gradient expanding K\"ahler-Ricci solitons with nonnegative Ricci curvature which can be found in \cite{CT05,Bry08,CCGGIIKLLN,Zhang09,CT11,Yang22}.
\begin{proposition}\label{GEKRS}
    Let $(N^n,J,g,f)$ be a complete noncompact expanding gradient K\"ahler-Ricci soliton with nonnegative Ricci curvature. Then
    
    (1) $R+|\nabla^gf|^2-f$ is constant on $N$, therefore we always normalize $f$ so that $f=R+|\nabla^gf|^2$.

    (2) The potential $f$ is a strictly convex exhaustion function with $\nabla^g f$ has the unique zero at $O\in N$, and $R$ attains its maximum at $O$. In particular, if $N$ has nonnegative bisectional curvature, then $N$ has bounded curvature.

    (3) $(N,g)$ is biholomorphic to $\mathbb{C}^n$ and has maximal volume growth.

    (4) $\nabla^g f$ is a complete vector field. Let $\varphi(t)$ be a family of biholomorphism of $(N,J)$ with
    \begin{equation}\label{selfsimilar}
    \frac{\partial}{\partial t}\varphi(t-1)(x)=-\frac{1}{t}\nabla^gf(\varphi(t-1)(x)), \varphi(0)=\text{id}.
    \end{equation}
    It follows that 
    \begin{equation}\label{solitoneq}
    g(t)=t\varphi(t-1)^* g
    \end{equation}
    solves the K\"ahler-Ricci flow equation on $[0,+\infty)$ with $g(1)=g$.

    (5) ( (13) in \cite{Yang22} ) The potential function $f$ satisfies the estimate
    \begin{equation}\label{potential}
        f(\varphi(O,t))\leq f(O)e^{2t}. 
    \end{equation}

    (6) There exists a global coordinate called the Poincar\'e coordinate on $N$ which is constructed as following: there exists a local coordinate $\left(U_O, z_i\right)$ at $O$ such that $\left\{\frac{\partial}{\partial z_i}\right\}$ is unitary at $O$ and the complexified vector field $\nabla f-\sqrt{-1}J\nabla f=\sum\limits_{i=1}^n\left(1+\mu_i\right) z_i \frac{\partial}{\partial z_i}$. Then one can extend $\left(U_O, z_i\right)$ to the whole $(N, g)$ by defining

$$
z_i(\psi(p, t))=e^{\left(1+\mu_i\right) t} z_i(p)
$$
for any $p \in U_O$ where $\psi(p, t)$ is the holomorphic flow generated by $\nabla f$ starting from $p$.
\end{proposition}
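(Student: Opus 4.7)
The plan is to verify the six items of Proposition \ref{GEKRS} by direct computation from the soliton equation $\operatorname{Ric}_{i\bar{j}}-f_{i\bar{j}}+g_{i\bar{j}}=0$ together with $f_{ij}=f_{\bar{i}\bar{j}}=0$, invoking the original sources \cite{CT05,Bry08,Zhang09,CT11,Yang22} for the deeper inputs. Items (1)--(3) are relatively standard: (1) is a contracted Bianchi computation yielding the conserved quantity $R+|\nabla f|^2-f$, which we normalize to zero. In (2), the identity $f_{i\bar{j}}=\operatorname{Ric}_{i\bar{j}}+g_{i\bar{j}}\geq g_{i\bar{j}}>0$ gives strict plurisubharmonicity, which combined with $f_{ij}=0$ promotes to strict real convexity, making $f$ a proper exhaustion with a unique critical point $O$; the maximum principle applied to $\Delta R+\langle\nabla f,\nabla R\rangle=2|\operatorname{Ric}|^2-2R$ then forces $R$ to attain its maximum at $O$, and boundedness of curvature under the nonnegative bisectional hypothesis follows from the sandwich $\operatorname{Rm}\leq C\cdot R$.

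For (3), we use Bryant's biholomorphism result \cite{Bry08}: the strictly convex exhaustion $f$ generates a holomorphic contraction with unique fixed point $O$, exhibiting $N$ as $\mathbb{C}^n$; maximal volume growth is then a Cao--Hamilton-type computation using the self-similar ansatz (\ref{solitoneq}). Item (4) is verified by differentiating $g(t)=t\varphi(t-1)^*g$ in $t$ and plugging in (\ref{selfsimilar}); the soliton equation exactly reproduces the K\"ahler-Ricci flow equation, and completeness of $\nabla f$ follows from the properness established in (2). Item (5) follows from (1): along the indicated orbit, the normalization $f=R+|\nabla f|^2$ together with $R\geq 0$ gives $|\nabla f|^2\leq f$, producing a differential inequality of the form $\dot f\leq 2f$, and Gr\"onwall yields (\ref{potential}); we refer to \cite{Yang22} for the detailed argument. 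Finally (6) is proved by linearizing the holomorphic vector field $Z=\nabla f-\sqrt{-1}J\nabla f$ at $O$: since $\nabla f(O)=0$, the linearization is determined by $f_{i\bar{j}}(O)=\delta_{ij}+\operatorname{Ric}_{i\bar{j}}(O)$, whose eigenvalues are precisely $1+\mu_i$; a unitary change of local coordinates at $O$ diagonalizes $Z$ as $\sum_{i=1}^n(1+\mu_i)z_i\partial_{z_i}$, and since each $1+\mu_i\geq 1$ the real flow of $\nabla f$ is a global contraction onto $O$, so the local $z_i$ extend globally by $z_i(\psi(p,t))=e^{(1+\mu_i)t}z_i(p)$ as in \cite{Bry08}.

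The main obstacle is bookkeeping across the different sign and normalization conventions in the literature: the sign of the soliton constant ($\lambda=1$ here rather than $\lambda=-1/2$), the specific additive normalization $f=R+|\nabla f|^2$ with no free constant, and the time-parametrization $\varphi(t-1)$ with $g(1)=g$ all have to be threaded consistently through (4)--(6), particularly when identifying the Poincar\'e coordinate weights $1+\mu_i$ with the eigenvalues of $\operatorname{Ric}(O)$ that also appear in Theorem \ref{LongKRflow}. Once these conventions are aligned, the proposition is a careful compilation of known results, with the new content being the uniform packaging needed for the applications in Sections \ref{sec:3}--\ref{sec:4}.
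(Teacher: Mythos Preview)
Your proposal is correct and, in fact, more detailed than the paper's own treatment: the paper does not prove Proposition \ref{GEKRS} at all, but simply lists the six items as known facts with a blanket citation to \cite{CT05,Bry08,CCGGIIKLLN,Zhang09,CT11,Yang22}. Your sketches for each item are the standard arguments found in those sources, and your explicit flagging of the sign/normalization bookkeeping is exactly the practical content of assembling this proposition.

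One small remark on (5): note that $O$ is the unique zero of $\nabla f$, so any flow generated by a multiple of $\nabla f$ fixes $O$ and the inequality \eqref{potential} is literally $f(O)\le f(O)e^{2t}$, which is trivial since $f(O)=R(O)\ge 0$. The nontrivial content of \cite[(13)]{Yang22} is the estimate for a general point $p$, obtained exactly as you say via $|\nabla f|^2\le f$ and Gr\"onwall; the paper's later use in Claim \ref{Claim2} only needs the bound on a bounded orbit segment, so either reading suffices.
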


\section{Proof of Theorem \ref{thm:B}}\label{sec:3}
The key observation is the following which may be well-known for experts in the field. However we include the short proof for the reader's convenience.
\begin{lemma}\label{smoothout}
 Let $(N,J,h,O,f)$ be a complete noncompact expanding gradient K\"ahler-Ricci soliton with nonnegative bisectional curvature and we normalize it by assuming $f=R+|\nabla^{h(1)}f|^2$. Let
 $$
 h(t)=t\varphi(t-1)^* h
 $$
 be the unique K\"ahler-Ricci flow solution on $[0,+\infty)$ such that $h(1)=h$. Where
 $$
 \frac{\partial}{\partial t}\varphi(t-1)(x)=-\frac{1}{t}\nabla^gf(\varphi(t-1)(x)),\quad \varphi(0)=\text{id}.
 $$
Then $(N,h(1))$ has a unique tangent cone at infinity $(N_\infty,h_\infty,O_\infty)$. Moreover, $(N,h(t),O)\rightarrow (N_\infty,h_\infty,O_\infty)$ as $t\rightarrow 0$ in the Gromov-Hausdorff sense.
\end{lemma}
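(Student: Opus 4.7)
The plan is to exploit the self-similar structure of the expanding soliton. From the defining relation $h(t) = t\,\varphi(t-1)^* h$ together with the fact that $\varphi(t-1)$ fixes the soliton center $O$ (the unique zero of $\nabla^h f$, by Proposition \ref{GEKRS}(2)), the map $\varphi(t-1)$ realizes a pointed isometry $(N, t\,h, O) \to (N, h(t), O)$ for every $t > 0$. Consequently the pointed Gromov-Hausdorff behaviour of $(N, h(t), O)$ as $t \to 0^+$ is identical to that of the rescaled family $(N, t\,h, O)$.

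Setting $\lambda_t = t^{-1/2}$, one has $(N, t\,h, O) = (N, \lambda_t^{-2} h, O)$, which is precisely the blow-down family of $(N, h)$ at the base point $O$, with $\lambda_t \to +\infty$ as $t \to 0^+$. Hence every subsequential pointed Gromov-Hausdorff limit of $(N, h(t), O)$ as $t\to 0$ is, by definition, a tangent cone at infinity of $(N, h(1))$. Existence of such subsequential limits follows from Gromov's precompactness theorem, applicable since $(N, h)$ has nonnegative Ricci curvature and maximal volume growth by Proposition \ref{GEKRS}(3).

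The remaining and main step is uniqueness of this tangent cone, which simultaneously upgrades subsequential convergence to genuine convergence as $t \to 0$. For this I would invoke Theorem \ref{infinitystrucutre} applied to $(N, h)$: every tangent cone is a metric K\"ahler cone biholomorphic to $\mathbb{C}^n$, and by part (5) its canonical coordinates arise, after the normalization (\ref{normalizationsequence}), as the limits of a single canonical coordinate fixed on $(N, h)$. By Theorem \ref{Liuthm1}(3) any two canonical coordinates on $(N, h)$ differ by a constant unitary transformation. In the soliton setting this is made explicit via the Poincar\'e coordinates of Proposition \ref{GEKRS}(6): the complexified dilation $\nabla^h f - \sqrt{-1}J\nabla^h f$ is diagonalized with eigenvalues $1 + \mu_i$, which by Theorem \ref{Consoli} match the homogeneous degrees $d_{\min}^{(i)}$ of the canonical coordinates on any tangent cone. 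This identifies the cone uniquely up to unitary equivalence.

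The first step (self-similar isometry and reduction to the blow-down) should be essentially immediate. The technical heart is the uniqueness argument, which matches the Poincar\'e-coordinate data on $(N, h)$ to the canonical-coordinate data on the tangent cone; once uniqueness is in place, the full $t \to 0^+$ Gromov-Hausdorff convergence follows at once from the reduction in the first two steps.
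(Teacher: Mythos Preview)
Your first two paragraphs are correct and in fact sharper than the paper's version: since $O$ is the unique zero of $\nabla^h f$ (Proposition~\ref{GEKRS}(2)), the flow $\varphi(t-1)$ fixes $O$ exactly, so $(N,h(t),O)$ and $(N,th,O)$ are pointed-isometric for every $t>0$. The paper instead proves only $d_{th}(\varphi(t-1)(O),O)\to 0$ (its Claim~\ref{Claim2}) by integrating $|\nabla f|$ along the flow, which is unnecessary given your observation.

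The gap is in your uniqueness step. What your Poincar\'e-coordinate argument establishes is that every tangent cone of $(N,h)$ is biholomorphic to $\mathbb{C}^n$ with canonical coordinates of the \emph{same} homogeneous degrees $d^{(i)}_{\min}=1+\mu_i$, i.e.\ all tangent cones share the same underlying complex manifold and Reeb field. But this does not determine the cone \emph{metric}: on $\mathbb{C}^n$ with a fixed Reeb field there are many inequivalent K\"ahler cone metrics $\frac{\sqrt{-1}}{2}\partial\bar\partial r^2$ (this is exactly the transverse-K\"ahler freedom discussed around Theorem~\ref{thm:C}). Matching coordinate degrees, or even matching the canonical coordinates up to a unitary, therefore does not force two tangent cones to be isometric as metric spaces.

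The paper runs the logic in the opposite direction. It first asserts (via Theorem~\ref{LongKRflow}) that the \emph{full} limit $\lim_{t\to 0}(N,h(t),O)$ exists and equals a metric K\"ahler cone $N_\infty$; the self-similarity/base-point identification you also use then forces $\lim_{t\to 0}(N,th,O)$ to exist and equal $N_\infty$, which is precisely uniqueness of the tangent cone. So the paper deduces uniqueness of the blow-down from convergence of the flow at $t\to 0$, whereas you attempt to prove uniqueness directly from coordinate data; to make your route work you would need a genuinely metric argument (for instance exploiting that $2\sqrt{f}$ is asymptotic to the distance function, so the level sets of $f$ pick out a single cone), not just holomorphic information.
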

\begin{proof}
Replacing $(M,g)$ in Theorem \ref{LongKRflow} by $(N,h(1))$, we conclude that there exists a metric K\"ahler cone $N_\infty$ such that $(N,h(t),O)\rightarrow (N_\infty,h_\infty,O_\infty)$ as $t\rightarrow 0$ in the Gromov-Hausdorff sense. This is based on the distance estimate by Simon-Topping \cite{ST21}, which is in general incorrect without $\operatorname{Ric} \geq 0$. So it remains to prove this cone $N_\infty$ is the unique tangent cone of $(N,h(1))$.

\begin{claim}\label{Claim2}
    $\lim\limits_{t\to 0}d_{th(1)}(\varphi(t-1)(O),O)=0$.
\end{claim}
\textbf{Proof of Claim \ref{Claim2}.} For any $s\leq 1$, by (2) and (\ref{potential}) in Proposition \ref{GEKRS},
    $$
    |\nabla^{h(1)}f|^2\left(\varphi(s)(O)\right)\leq R+|f(\varphi(s)(O))|\leq C(1+e^{2})\leq C.
    $$
    Integrating along $\varphi(s)(O)$, we obtain
    $$
    \sqrt{t}d_{h(1)}(\varphi(t-1)(O),O)\leq \sqrt{t}\int_{t}^1\frac{C}{s}ds=-C\sqrt{t}\log(t)\rightarrow 0 \text{ as } t\rightarrow 0.
    $$
\qed

    In fact now we have
    $$
    \begin{aligned}
    (N_\infty,h_\infty,O_\infty)&=\lim\limits_{t\to 0}(N,h(t),O)\\
    &=\lim\limits_{t\to 0}(N,t\varphi(t-1)^* h(1),O)\\
    &=\lim\limits_{t\to 0}(N,th(1),\varphi(t-1)(O))\\
    &=\lim\limits_{t\to 0}(N,th(1),(O)).
    \end{aligned}
    $$
    All the limits above are in the sense of Gromov-Hausdorff convergence. The second-to-last equality holds by Claim \ref{Claim2}:
\end{proof}

Hence we now have a clear geometric framework characterizing the long-time asymptotics of the K\"ahler-Ricci flow on a complete noncompact K\"ahler manifold with nonnegative bisectional curvature and maximal volume growth, along with its relationship to the tangent cones.

We are now in a position to prove Theorem \ref{thm:B}. 

\textbf{Proof of Theorem \ref{thm:B}.}
 
   Following the notation of Theorem \ref{LongKRflow}, for any sequence $t_i\to t$, the rescaled K\"ahler-Ricci flow $g_i(t)$ converges subsequenctially to a K\"ahler-Ricci flow $(N,J_{\infty},h(t))$ coming out of a tangent cone $M_{\infty}$ of $M$, and $h(t)$ is a family of expanding gradient K\"ahler-Ricci soliton metrics on $N$. Take the Poincar\'e coordinate $(w_1,\cdots,w_n)$ from Proposition \ref{GEKRS} on $(N,h(1),O)$ and normalize them such that
   $$
   \dashint_{B_{h(1)}(O,1)}{w_i}\overline{w_{j}}=\delta_{ij},\quad w_{i}(O)=0\quad \forall i,j\in \{1,\cdots,n\}.
   $$. 
   
   According to the proof of Theorem 1.4 in \cite{Yang22}, the aforementioned Poincar\'e coordinate $(w_1,\cdots,w_n)$ is a canonical coordinate on $(N,h(1),O)$. Following the procedure of part (5) in Theorem \ref{infinitystrucutre} on $(N,h(1),O)$ and $(M,g,p)$, we get two canonical coordinates $(z_{1\infty},\cdots,z_{n\infty})$ and $(w_{1\infty},\cdots,w_{n\infty})$ on $N_{\infty}=M_\infty$, with degrees $\overrightarrow{d_{\operatorname{min}}}(M)$ and $\overrightarrow{d_{\operatorname{min}}}(N)$ by part (3) of Theorem \ref{infinitystrucutre}. Also we know that $\overrightarrow{d_{\operatorname{min}}}(N)=\{\mu_1+1,\cdots,\mu_n+1\}$ by Theorem \ref{Consoli}. By part (4) of Theorem \ref{infinitystrucutre},
   $$
   \overrightarrow{d_{\operatorname{min}}}(M)=\overrightarrow{d_{\operatorname{min}}}(N)=\{\mu_1+1,\cdots,\mu_n+1\}.
   $$
   Therefore, (1), (2) has been proved. For (3), note that in \cite{Liu24}, Liu has shown that $\operatorname{ASCD}(M,g(t))=4n(\sum\limits_{i=1}^n d^{(i)}_{\operatorname{min}}-n)=4n(\sum\limits_{i=1}^n \mu_i)$ is invariant along $t$, based on Simon-Topping's distance estimate. The proof is complete.
   \qed

As an application, we can use it to prove Corollary \ref{bridge}:

\textbf{Proof of Corollary \ref{bridge}}:

    Take a canonical coordinate $(f_1,\cdots,f_n)$ on $(M,g)$, with degree $\overrightarrow{d_{\operatorname{min}}}$. By comparing the distance function $d_{g(t)}(x,p)$ along the flow using the shrinking balls lemma by Simon and Topping (Corollary 3.3 in \cite{ST22}), we can conclude that $(f_1,\cdots,f_n)$ is still a global coordinate on $(M,g(t))$, with the same degree $\overrightarrow{d_{\operatorname{min}}}$. Let $g_k(t)=\frac{1}{t_k}g(t_kt)$ with $t_k\rightarrow +\infty$, we normalize these coordinate functions such that
    $$
    \dashint_{B_{g_k(1)}(p,1)}{f_{ik}}\overline{f_{jk}}=\delta_{ij},\quad f_{ik}(p)=0\quad \forall i,j\in \{1,\cdots,n\}, k\in \mathbb{N}.
    $$
    which makes $(f_{1k},\cdots, f_{nk})$ serving as a canonical coordinate on $(M,g_k(1))$.

    Following a similar procedure as part (5) in Theorem \ref{infinitystrucutre}, $(f_{1k},\cdots, f_{nk})$ converges smoothly to $(f_{1\infty},\cdots,f_{n\infty})$ on $(N,h(1))$ and serves as a canonical coordinate on $N$. Due to part (3) of Theorem \ref{Liuthm1}, we can assume $(f_{1\infty},\cdots,f_{n\infty})$ is just a Poincar\'e coordinate on $(N,h(1))$. Therefore
    $$
    \operatorname{Ric}_{h(1)}\left(\frac{\frac{\partial}{\partial f_{i\infty}}}{\|\frac{\partial}{\partial f_{i\infty}}\|},\overline{\frac{\frac{\partial}{\partial f_{i\infty}}}{\|\frac{\partial}{\partial f_{i\infty}}\|}}\right)=\mu_i.
    $$
    Finally, by the smooth convergence of $g_k(1)$ and the proof of Theorem 1.2 in \cite{CT06}, the Corollary is proved. In fact, for any $i$ and any nonzero $v\in V_1\oplus\cdots\oplus V_i$ but $v\notin V_1\oplus\cdots\oplus V_{i-1}$,
    $$
    \lim\limits_{t\to\infty}t\operatorname{Ric}(v(t),\overline{v(t)})=\mu_i
    $$
    always holds.
\qed
\section{Proof of Theorem \ref{thm:A}}\label{sec:4}
\textbf{Proof of Theorem \ref{thm:A}}:

    Firstly we assume that $M$ is of nonmaximal volume growth, if there exists a nonzero polynomial growth holomorphic $n$-form $s\in H^0_P(M,\mathcal{K}_M)$, the Poincar\'e-Lelong equation states that
    $$
    \frac{\sqrt{-1}}{2\pi}\partial\bar{\partial}\log \|s\|^2=(s)+\operatorname{Ric}\geq 0,
    $$
    where $(s)$ is the zero divisor of $s$. By direct computation, $h:=\log (\|s\|^2+1)$ is a smooth plurisubharmonic function of logarithmic growth. Since the universal cover of $M$ does not split, the heat-flow method of Ni-Tam \cite{NT03} can be applied to perturb $h$ to $h^{\prime}$ which is a smooth strictly plurisubharmonic function of still logarithmic growth. Then we solve the $\bar{\partial}$-equation by applying the H\"ormander $L^2$-estimate to obtain a nonconstant polynomial growth holomorphic function on $M$, contradicting Theorem \ref{Niconj}. As a result, $D_{\operatorname{min}}=+\infty=\sum\limits_{i=1}^n d_{\operatorname{min}}^{(i)}-n$.

    Now we assume that $M$ is of maximal volume growth. Let $(z_1,\cdots,z_n)$ be a canonical coordinate of $M$ with degrees $\left(d_{\operatorname{min}}^{(1)}, \cdots, d_{\operatorname{min}}^{(n)}\right)$. By gradient estimate \cite{CY75},
    $$\label{deg}
    D_{\operatorname{min}}=\operatorname{deg}(dz_1\wedge\cdots\wedge dz_n)\leq \sum\limits_{i=1}^n d_{\operatorname{min}}^{(i)}-n, 
    $$

In the opposite direction, taking a blow-down sequence $(M,g_i=r_i^{-2}g)$ that converges to a tangent cone $M_\infty$ in the Gromov-Hausdorff sense, where $r_i\rightarrow \infty$. 

Assume 
$$
D_{\operatorname{min}}\leq \sum\limits_{i=1}^nd_{\operatorname{min}}^{(i)}-n-\varepsilon
$$
for some $\varepsilon>0$.

Following the notations in part (5) of Theorem \ref{infinitystrucutre}, and define
$$
\varphi_i=\log \|dz_{1i}\wedge\cdots\wedge dz_{ni}\|^2_{g_i}.
$$
As in \cite{Liu24}, since $\varphi_i$ is plurisubharmonic on $M_i$, by passing to a subsequence, we may assume $\varphi_i$ converges to a plurisubharmonic function on $M_\infty$ in $L_{\operatorname{loc}}^1$, denoted by $\varphi=\log |dz|^2$. Therefore $e^{\varphi_i}\rightarrow e^{\varphi}$ almost everywhere.

By the three circle theorem as in the proof of Theorem \ref{infinitystrucutre} (5), there exists a constant $C$ such that
$$
e^{\varphi_i}\leq C r_i^{2D_{\operatorname{min}}}\leq Cr_i^{2(\sum\limits_{i=1}^nd_{\operatorname{min}}^{(i)}-n-\varepsilon)}.
$$
uniformly. Thus $e^{\varphi}\leq Cr^{2(\sum\limits_{i=1}^nd_{\operatorname{min}}^{(i)}-n-\varepsilon)}$ almost everywhere, which contradicts Page 6, Claim 1 in \cite{Liu24} if $\varepsilon>0$, since $e^{\varphi}$ is homogeneous of degree $2(\sum\limits_{i=1}^nd_{\operatorname{min}}^{(i)}-n)$. Therefore $D_{\operatorname{min}}=\sum\limits_{i=1}^n d_{\operatorname{min}}^{(i)}-n.$
\qed
\begin{remark}
    The strategy in the proof of Corollary 1.5 in \cite{Yang22} may not apply in our setting. Although the distance estimate remains valid along the K\"ahler–Ricci flow, the norm of the volume form $\|dz_1\wedge\cdots\wedge dz_n\|_{g(t)}$ cannot be effectively controlled; see (\ref{volformchange}).
\end{remark}
\section{Further discussion}\label{sec:5}

There are several remarks and questions that seem to be interesting to the author.

1. In \cite{HS16} He-Sun established a classification theorem of compact Sasaki manifolds with positive transverse bisectional curvature. We begin by reviewing their work.

Equip the sphere $S^{2n-1}$ with its standard Sasaki structure, whose K\"ahler cone is $\mathbb{C}^n$ with the flat metric $\omega=\frac{\sqrt{-1}}{2}\sum\limits_{i=1}^n dz^i\wedge d\bar{z}^i$. The Reeb field $\xi=\operatorname{Re}(\sqrt{-1}\sum\limits_{i=1}^n z^i\frac{\partial}{\partial z^i})$, the contact form on $S^{2n-1}$ is $\eta=\frac{1}{\sum\limits_{i=1}^n|z^i|^2}\sum\limits_{i=1}^n(y^idx^i-x^idy^i)$. The automorphism group of this Sasaki metric is $U(n)$, take a maximal torus $\mathbb{T}^n$ in $U(n)$ consisting of diagonal elements, with Lie algebra $\mathfrak{t}\cong \mathbb{R}^n$. It is not hard to check that the Reeb cone $\mathcal{R}$ in this case is $\mathbb{R}^n_+$.

We call a Sasaki structure on $\mathbb{S}^n$ simple if it is isomorphic to a Sasaki structure that comes out of a simple deformation from the standard Sasaki structure above. And the corresponding Sasaki manifold is called a weighted Sasaki sphere. For a precise description of  simple deformation, see Section 2 of \cite{HS16}.

Their main result is the following structure theorem.
\begin{theorem}[Theorem 1.1 in \cite{HS16}]
    Let $(M^{2n-1},g)$ be a compact simply connected Sasaki manifold with positive bisectional curvature, then its K\"ahler cone is biholomorphic to $\mathbb{C}^n$. Moreover $M$ is a weighted Sasaki sphere.
\end{theorem}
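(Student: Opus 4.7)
The overall plan is to first classify the K\"ahler cone $C(M)$ as a complex manifold and then, using the classification of holomorphic Reeb fields on $\mathbb{C}^n$, pin down the Sasaki structure up to a transverse K\"ahler potential. The Sasakian positive bisectional curvature hypothesis translates, via the standard Gauss-type identities relating Sasakian curvature and the curvature of the transverse K\"ahler structure, into the statement that the transverse K\"ahler structure on the (local) leaf space of the Reeb foliation has positive bisectional curvature; in cone terms this is $\operatorname{BK}^T \geq 2$.

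I would start with the \emph{quasi-regular} case, in which every Reeb orbit closes up and the leaf space $Z$ is a compact K\"ahler orbifold of complex dimension $n-1$ carrying a transverse K\"ahler form of positive bisectional curvature. By the orbifold extension of the Siu--Yau / Mori solution of the Frankel conjecture (developed by Mok and later authors in the Fano orbifold setting), such a $Z$ must be biholomorphic to $\mathbb{C}P^{n-1}$ as an orbifold, and the positivity of the transverse bisectional curvature can be used to rule out nontrivial orbifold isotropy along complex-codimension-one strata. The Sasaki manifold $M$ is then a Seifert $S^1$-bundle over $\mathbb{C}P^{n-1}$ with positive integer weights $d_1,\ldots,d_n$, and the cone $C(M)$ is the corresponding weighted affine cone over $\mathbb{C}P^{n-1}$ with the apex glued in. Simple connectedness of $M$ rules out any additional finite quotient, forcing $C(M)\cong\mathbb{C}^n$ with diagonal Reeb field $\xi=\operatorname{Re}(\sqrt{-1}\sum d_i z_i \partial_{z_i})$.

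To remove the quasi-regularity hypothesis I would invoke a Type I deformation. The set $\mathcal{R}$ of admissible Reeb fields is an open convex cone in the Lie algebra of a maximal torus inside the CR automorphism group, and the quasi-regular directions form a dense subset (rational rays in $\mathcal{R}$). A Type I deformation leaves the underlying complex structure on $C(M)$ unchanged and only varies the Reeb field together with the cone metric. Using compactness of $M$ and smooth dependence of the transverse metric on $\xi$, one checks that the transverse positive bisectional curvature persists under sufficiently small such perturbations, so $\xi$ can be approximated by quasi-regular $\xi_k$; each associated cone is biholomorphic to $\mathbb{C}^n$ by the previous step, and since the complex structure is unchanged this conclusion transfers back to the original $\xi$. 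The Reeb field of $M$ is then a holomorphic vector field on $\mathbb{C}^n$ with an isolated zero at the origin, hence conjugate to a diagonal linear field by the Poincar\'e--Dulac theorem, possibly with irrational weights $d_i>0$.

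Once the CR manifold and Reeb field of $M$ have been matched with those of a standard weighted Sasaki sphere of weights $d_1,\ldots,d_n$, the given Sasaki metric and the standard one can differ only by the choice of transverse K\"ahler potential, which is precisely the data modified by a Type II deformation. The principal obstacle I anticipate is the orbifold-level analogue of the Frankel conjecture: one must upgrade the classical positive-bisectional-curvature rigidity to the orbifold setting while controlling stabilizers, and handle the Fano orbifold possibilities that arise from the weighted bundle structure. A secondary but genuine technical point is verifying that the transverse positive bisectional curvature genuinely survives the Type I approximation for irregular $\xi$, since a priori the transverse metric changes globally under the deformation; once these two points are secured, the final identification as a Type II perturbation of a standard weighted sphere follows in a formal manner.
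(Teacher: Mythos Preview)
The paper does not contain a proof of this statement. The theorem is quoted verbatim as Theorem~1.1 of \cite{HS16} (He--Sun) and is used as an external input in Section~\ref{sec:5}; the paper neither proves nor sketches it. There is therefore no ``paper's own proof'' against which your proposal can be compared.

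That said, your outline is a faithful summary of the strategy actually carried out in \cite{HS16}: reduce first to the quasi-regular case, apply the orbifold Frankel/Siu--Yau theorem to the leaf space to identify it with a weighted projective space, use simple connectedness to rule out further quotients, and then handle the irregular case by Type~I approximation, finishing with the observation that two Sasaki structures sharing the same CR structure and Reeb field differ by a Type~II deformation. The two caveats you flag---the orbifold version of the Frankel conjecture and stability of transverse positive bisectional curvature under Type~I perturbation---are exactly the technical points He--Sun address, so your proposal is essentially correct as a description of their argument, but it is not something the present paper undertakes.
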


In our case, we can give a singular version of He-Sun's result. Let $(M,g)$
be a complete noncompact K\"ahler manifold with nonnegative bisectional curvature and maximal volume growth. Let $(M_{\infty}=C(X)\cong \mathbb{C}^n,\omega=\frac{\sqrt{-1}}{2}\partial\bar{\partial}r^2)$ be a tangent cone with Reeb field $\xi=\operatorname{Re}(\sqrt{-1}\sum\limits_{i=1}^n d_{\operatorname{min}}^{(i)}z^i\frac{\partial}{\partial z_i})$, where $(z_1,\cdots,z_n)$ is a canonical coordinate. Choose a smooth K\"ahler cone metric $\hat{\omega}$ on $\mathbb{C}^n$ with same Reeb field $\xi$, whose link is a weighted Sasaki sphere, let the radial function be $\hat{r}$.

It is straightforward to verify that the quotient function $\varphi=\frac{r}{\hat{r}}$ is basic ( i.e. $L_\xi\varphi=L_{J\xi}\varphi=0$ ). According to Proposition 1 in \cite{Liu24}, the current $\sqrt{-1}\partial\bar{\partial}\log r=\sqrt{-1}\partial\bar{\partial}\log (\hat{r}\exp\varphi)={\hat{\omega}}^T+\frac{1}{2}d_Bd_B^c\varphi\geq 0$, which means that $\varphi\in \operatorname{PSH}(S, {\hat{\omega}}^T)$, see \cite{Van15}. Therefore, the link $X$ is a ``singular'' weighted Sasaki sphere.

2. In \cite{Der16} the author showed that a positively curved smooth metric cone can be desingularized by an expanding gradient Ricci soliton with positive curvature operator. Motivated by this, we would expect that the following analogue holds.
\begin{conjecture}
    For any smooth K\"ahler cone metric $(\mathbb{C}^n,\omega=\frac{\sqrt{-1}}{2}\partial\bar{\partial}r^2)$ with nonnegative bisectional curvature, there exists an expanding gradient K\"ahler-Ricci soliton with nonnegative bisectional curvature coming out of this cone.
\end{conjecture}

By Theorem \ref{infinitystrucutre} (2), it is equivalent to ask that whether such cone can be realized as a tangent cone of a K\"ahler manifold with nonnegative bisectional curvature. When the K\"ahler cone is regular, and has positive curvature operators on (1,1)-forms, the conjecture has been solved by Conlon-Deruelle in \cite{CD20} by solving a smooth family of complex Monge-Amp\'ere equations using continuity method. For more related work, we refer to \cite{CDS24}.

\subsection*{Acknowledgements} The author would like to express his sincere gratitude to his advisor, Yihu Yang, for the constant support and lots of inspiring discussions. He would like to extend his thanks to Gang Liu and Zhiqin Lu for many valuable discussions. He is thankful to Man-Chun Lee for bringing the preprint \cite{Lee25} to his attention, as well as for interest in this work and valuable comments. He is grateful to University of California, Irvine for hospitality during his visit in fall 2025. He would like to thank the referee for various suggestions and comments that greatly improve the exposition of this paper and for pointing out some inaccuracy of the previous version. The author is partially supported by NSF of China (No.12071283).
\bibliographystyle{amsalpha}
\bibliography{references}
\end{document}